\documentclass[12pt,reqno]{amsart}
\usepackage{geometry}                
\geometry{letterpaper}                   
\usepackage{amsmath,amssymb}
\usepackage{graphicx}
\usepackage{amssymb,latexsym, amsmath}
\usepackage{amsfonts,amsthm}
\usepackage{amscd}
\usepackage{mathrsfs}
\usepackage{hyperref}
\usepackage{eucal}
\usepackage{epstopdf}
\usepackage{amsgen}
\usepackage{xspace}
\usepackage{verbatim}
\usepackage{stmaryrd}
\usepackage{enumitem}
\newlist{steps}{enumerate}{1}
\setlist[steps, 1]{label = Step \arabic*:}

\newcommand{\gd}{\Delta}

\newcommand{\inpt}[1]{\langle #1 \rangle}

\newcommand{\gw}{\Omega}

\newcommand{\ga}{\gamma}

\newcommand{\G}{\Gamma}
\newcommand{\gl}{\lambda}

\newcommand{\gk}{\kappa}
\newcommand{\om}{\omega}

\newcommand{\nb}{\nabla}
\newcommand{\vp}{\varphi}
\newcommand{\ve}{\varepsilon}
\newcommand{\pdr}{\partial}

\newcommand{\tup}{\textup}

\newcommand{\beq}{\begin{equation}}
\newcommand{\eeq}{\end{equation}}
\newcommand{\bea}{\begin{align}}
\newcommand{\eea}{\end{align}}
\newcommand{\bthm}{\begin{theorem}}
\newcommand{\ethm}{\end{theorem}}
\newcommand{\bpr}{\begin{proof}}
\newcommand{\epr}{\end{proof}}
\newcommand{\bcl}{\begin{corollary}}
\newcommand{\ecl}{\end{corollary}}
\newcommand{\bpn}{\begin{proposition}}
\newcommand{\epn}{\end{proposition}}
\newcommand{\bre}{\begin{remark}}
\newcommand{\ere}{\end{remark}}
\newcommand{\bdf}{\begin{definition}}
\newcommand{\edf}{\end{definition}}
\newcommand{\bss}{\begin{align*}}
\newcommand{\ess}{\end{align*}}

\newcommand{\bl}{\label}

\newcommand{\zsw}{\theta_s \omega}
\newcommand{\ctw}{\theta_{-t} \omega}

\newtheorem{theorem}{Theorem}[section]
\newtheorem{corollary}[theorem]{Corollary}

\newtheorem{lemma}[theorem]{Lemma}
\newtheorem{proposition}[theorem]{Proposition}

\theoremstyle{definition}
\newtheorem{definition}[theorem]{Definition}
\theoremstyle{remark}
\newtheorem{remark}{Remark}

\numberwithin{equation}{section}


\begin{document}

\title[Stochastic Hindmarsh-Rose Equations]{Random Attractor for Stochastic Hindmarsh-Rose Equations with Additive Noise}

\author[C. Phan]{Chi Phan}
\address{Department of Mathematics and Statistics, University of South Florida, Tampa, FL 33620, USA}
\email{chi@mail.usf.edu}
\thanks{}

\author[Y. You]{Yuncheng You}
\address{Department of Mathematics and Statistics, University of South Florida, Tampa, FL 33620, USA}
\email{you@mail.usf.edu}
\thanks{}

\subjclass[2000]{Primary: 35K55, 35Q80, 37L30, 37L55, 37N25 ; Secondary: 35B40, 60H15, 92B20}



\keywords{Stochastic Hindmarsh-Rose equations, additive noise, random dynamical system, random attractor, pullback absorbing set, pullback asymptotic compactness.}

\begin{abstract}
For stochastic Hindmarsh-Rose equations with additive noises in the study of neurodynamics, the longtime and global pullback dynamics on a two-dimensional bounded domain is explored in this work. Using the additive transformation and by the sharp uniform estimates, we proved the pullback absorbing and the pullback asymptotically compact characteristics of the Hindmarsh-Rose random dynamical system in the $L^2$ Hilbert space. It shows the existence of a random attractor for this random dynamical system.
\end{abstract}

\maketitle

\section{\textbf{Introduction}}
The Hindmarsh-Rose equations for neuronal spiking-bursting of the intracellular membrane potential observed in experiments was originally proposed in \cite{HR1, HR2}. This mathematical model composed of three coupled nonlinear ordinary differential equations has been studied through numerical simulations and mathematical analysis in recent years, cf. \cite{HR1, HR2, IG, MFL, SPH, Su} and the references therein. It exhibits rich and interesting spatial-temporal bursting patterns, especially chaotic bursting and dynamics as well as complex bifurcations. 

Very recently, we have proved the existence of a random attractor for the stochastic Hindmarsh-Rose equations with multiplicative noise in \cite{Phan}.

In this work, we shall study the longtime random dynamics in terms of the existence of a random attractor for the diffusive Hindmarsh-Rose equations driven by the additive noise,
\begin{align}
du & = d_1 \gd u \,dt+  (\vp (u) + v - z + J)\, dt + h_1(x)\, dW_1, \bl{suq1} \\
dv & = d_2 \gd v \,dt+ (\psi (u) - v) \,dt +  h_2(x)\,  dW_2, \bl{svq1} \\
dz & = d_3 \gd z\, dt + (q (u - c) - rz)\, dt + h_3(x) \,dW_3, \bl{szq1}
\end{align}
for $t > \tau,\; x \in \gw \subset \mathbb{R}^{n}$ ($n \leq 2$), where the nonlinear terms are

$$
\vp (u) = au^2 - bu^3 \quad \text{and} \quad \psi (u) = \alpha - \beta u^2.
$$
We impose the homogeneous Neumann boundary condition
\begin{equation} \label{nbc3}
    \frac{\partial u}{\partial \nu} (t, x) = 0, \; \; \frac{\partial v}{\partial \nu} (t, x)= 0, \; \; \frac{\partial z}{\partial \nu} (t, x)= 0,\quad  t > \tau \in \mathbb{R} ,  \; x \in \partial \gw,
\end{equation}
and an initial condition
\begin{equation} \bl{inc3}
u(\tau, x) = u_0 (x), \; v(\tau, x) = v_0 (x), \; z(\tau, x) = z_0 (x), \quad \tau \in \mathbb{R}, \; x \in \gw.
\end{equation}
The parameters $d_1, d_2, d_3, a, b, \alpha, \beta, q, r$, and $J$ are arbitrary positive constants and $c \in \mathbb{R}$ which is the reference value for the membrane potential of a neuron cell. Moreover, $\{h_i(x): i=1, 2, 3\} \subset W^{2,4}(\gw)$ are given functions and $W(t) = \{W_1(t), W_2(t), W_3(t)\}$, where $W_i (t), i = 1, 2, 3$, are independent, two-sided, real-valued standard Wiener processes on an underlying probability space $(\mathfrak{Q}, \mathcal{F}, P)$ to be specified later. 

In this system \eqref{suq1}-\eqref{szq1}, the variable $u(t,x)$ is the membrane electric potential of a neuronal cell, the variable $v(t, x)$ represents the transport rate of the ions of sodium and potassium through the fast ion channels and is called the spiking variable, while $z(t, x)$ is the bursting variable, which corresponds to the transport rate across the neuronal cell membrane through slow channels of calcium and other ions correlated to the bursting phenomenon.

In 1982-1984, J.L. Hindmarsh and R.M. Rose developed the mathematical model of ordinary differential equations to describe neuronal dynamics:
\begin{equation} \label{HR}
\begin{split}
\frac{du}{dt} & = au^2 - bu^3 + v - z + J,  \\
\frac{dv}{dt} & = \alpha - \beta u^2  - v,  \\
\frac{dz}{dt} & =  q (u - u_R) - rz.
\end{split}
\end{equation}
This model characterizes the phenomena of synaptic bursting and chaotic bursting. 

Neuronal signals are short electrical pulses known as spike or action potential. Neurons may display bursts of alternating phases of rapid firing spikes and then quiescence. Bursting patterns occur in various bio-systems such as pituitary melanotropic gland, thalamic neurons, respiratory pacemaker neurons, and insulin-secreting pancreatic $\beta$-cells, cf. \cite{BRS, CK,CS, HR2}. Mathematical neuron models on bursting behavior have been investigated mainly by using bifurcation theory and numerical simulations, cf. \cite{BB, ET, EI, MFL, Ri, SPH, Tr, WS, Su}.

The four-dimensional Hodgkin-Huxley equations \cite{HH}, which is highly nonlinear if without simplification, and the two-dimensional FitzHugh-Nagumo equations \cite{FH} are well-known models for excitable neurons with many studies but not quite suitable to characterize the neuronal chaotic bursting and chaotic dynamics. The 2D nature of FitzHugh-Nagumo equations prevents that model to generate any chaotic solutions.

Neurons communicate and coordinate actions through regular synaptic coupling or diffusive synchronizing coupling in neuroscience. Synaptic coupling has to reach certain threshold for release of quantal vesicles \cite{DJ, Ru, SC}, while the chaotic coupling exhibited in the current simulations and analysis of this Hindmarsh-Rose model \eqref{HR} shows more rapid and effective synchronization of neurons due to \emph{lower threshold} than the synaptic coupling \cite{Tr, Su}. Moreover, the Hindmarsh-Rose model allows varying interspike-interval when the parameters vary. Therefore, this 3D Hindmarsh-Rose model \eqref{HR} is a suitable choice for the investigation of both regular bursting and chaotic bursting.

Recently it has been proved by the two authors of this paper and J. Su in \cite{PYS} that there exist global attractors for the diffusive and partly diffusive Hindmarsh-Rose equations. We have also shown in \cite{Phan} that there exists a random attractor for the stochastic Hindmarsh-Rose equations with multiplicative noise. 

With the presence of additive independent white noises in a random environment as well as the diffusion of ions and membrane potential included in the Hindmarsh-Rose neuron model, here in this paper we shall study the longtime and global dynamics of pullback solutions of the random dynamical system generated by \eqref{suq1}-\eqref{szq1}, focusing on the existence of a random attractor through the approach of the additive transformation by means of the Ornstein-Uhlenbeck processes. 

The rest of Section 1 is the formulation of the stochastic system \eqref{suq1}-\eqref{szq1} with some basic concepts and results in the theory of random dynamical systems. In Section 2, the global existence of pullback weak solutions is established together with the pullback absorbing property of the stochastic Hindmarsh-Rose cocycle in the $L^{2}$ space. In Section 3, we shall prove the pullback asymptotical compactness and the main result on the existence of a random attractor for the diffusive Hindmarsh-Rose random dynamical system with the additive noise. 

\subsection{\textbf{Preliminaries}}
To study the stochastic and global dynamics of differential equations in the asymptotically long run, we recall preliminary concepts for random dynamical systems, cf. \cite{Ar, Ch, CDF, CF, EZ, Ok, SH, Tm, W12, W14, W19, Y12, Y14, Y17}. Let $(\mathfrak{Q}, \mathcal{F}, P)$ be a probability space and let $X$ be a real Banach space. 

\begin{definition} \label{sc1}
	$(\mathfrak{Q}, \mathcal{F}, P, \{{\theta_t}\}_{t \in \mathbb{R}})$ is said to be a $\textit{metric dynamical system}$, which is briefly called MDS, if $(\mathfrak{Q}, \mathcal{F}, P)$ is a probability space with a time-parametrized mapping $\theta_t$ and the following conditions are satisfied:
	
	(i) The mapping $\theta_t : \mathfrak{Q} \to \mathfrak{Q}$ is $\mathcal{F}$-measurable, $t \in \mathbb{R}$.
	
	(ii) $\theta_0$ is the identity on $\mathfrak{Q}$.
	
	(iii) $\theta_{t + s} = \theta_t \circ \theta_s$ for all $t, s \in \mathbb{R}$.
	
	(iv) $\theta_t$ is probability invariant, meaning $\theta_t P = P$ for all $t \in \mathbb{R}$.
	
	\noindent Here $(\theta_t P)(S) = P(\theta_t S)$ for any $S \in \mathcal{F}$.
\end{definition}
	Denote by $\mathscr{B} (X)$ the $\sigma$-algebra of all Borel sets in a Banach space $X$
	
\begin{definition} \label{sc2}
	A continuous $\textit{random dynamical system}$ briefly called a cocycle on $X$ over $(\mathfrak{Q}, \mathcal{F}, P, \{{\theta_t}\}_{t \in \mathbb{R}})$ is a mapping
	$$
	\varphi (t, \omega, x) : [0, \infty) \times  \mathfrak{Q} \times X \to X,
	$$
	which is $(\mathscr{B} (\mathbb{R^+}) \otimes \mathcal{F} \otimes \mathscr{B} (X), \mathscr{B} (X))$-measurable and satisfies the following conditions for every $\omega$ in $\mathfrak{Q}$:
	
	(i) $\varphi (0, \omega, \cdot)$ is the identity operator on $X$.
	
	(ii) The cocycle property holds:
	$$
	\varphi (t + s, \omega, \cdot) = \varphi (t, \theta_s \omega, \varphi (s, \omega, \cdot)), \quad  \text{for all} \; \; t, s \geq 0.
	$$
	
	(iii) The mapping $\varphi (\cdot, \omega, \cdot) : [0, \infty) \times X \to X$ is continuous.
\end{definition}

\begin{definition} \label{sc3}
	A set-valued function $B : \mathfrak{Q} \to 2^X$ is called a $\textit{random set}$ in $X$ if its graph $\{(\omega, x) : x \in B(\omega)\} \subset \mathfrak{Q} \times X$ is an element of the product $\sigma$-algebra $\mathcal{F} \otimes \mathscr{B} (X)$. A $\textit{bounded}$ random set $B(\omega) \subset X$ means that there is a random variable $r(\omega) \in [0, \infty), \omega \in \mathfrak{Q}$, such that $\interleave {B(\omega)} \interleave := \sup_{x \in B(\omega)} \|x\| \leq r(\omega)$ for all $\omega \in \mathfrak{Q}$. A bounded random set is called $\textit{tempered}$ with respect to $\{\theta_t\}_{t \in \mathbb{R}}$ on $(\mathfrak{Q}, \mathcal{F}, P)$ if for $\omega \in \mathfrak{Q}$ and for any constant $\ve > 0$,
	$$
	\lim_{t \to \infty} e^{-\ve t} \interleave B(\theta_{-t} \omega)\interleave = 0.
	$$
	A random set $S(\omega) \subset X$ is called \emph{compact} (respectively \emph{precompact}) if for each $\omega \in \mathfrak{Q}$ the set $S(\omega)$ is a compact (resoectively precompact) set in $X$.
\end{definition}

\begin{definition} \label{sc4}
	A random variable $R : (\mathfrak{Q}, \mathcal{F}, P) \to (0, \infty)$ is called $ \textit{tempered with}$ $\textit{respect to a metric dynamical system}$ $\{\theta_t\}_{t \in \mathbb{R}}$ on $(\mathfrak{Q}, \mathcal{F}, P)$, if for all $\om \in \gw$,
	$$
	\lim_{t \to -\infty} \frac{1}{t} \; \text{log} \; R(\theta_t \omega) = 0.
	$$
\end{definition}

We use $\mathscr{D}_X$ to denote an inclusion-closed family of random sets in $X$, which is called a $\textit{universe}$ \cite{Ar, Ch}. In this work, $\mathscr{D}_X$ is the universe of all the tempered random sets in a specified space $X$.

\begin{definition} \label{sc5}
	 A random set $K \in \mathscr{D}_X$ is called a $\textit{pullback absorbing set}$ with respect to a random dynamical system $\varphi$ over the MDS $(\mathfrak{Q}, \mathcal{F}, P, \{{\theta_t}\}_{t \in \mathbb{R}})$, if for any bounded random set $B \in \mathscr{D}_X$ and  $\omega \in \mathfrak{Q}$ there exists a finite time $T_B (\omega) > 0$ such that
	$$
	\varphi (t, \theta_{-t} \omega, B (\theta_{-t} \omega)) \subset K (\omega), \quad \text{for all} \; \; t \geq T_B (\omega).
	$$
\end{definition}

\begin{definition} \label{sc6}
	A continuous random dynamical system $\varphi$ is \emph{pullback asymptotically compact} with respect to $\mathscr{D}_X$ , if for $\omega \in \mathfrak{Q}$,
	$$
	\{\varphi (t_m, \theta_{-t} \omega, x_m)\}^\infty_{m = 1} \; \text{has a convergent subsequence in} \; X,
	$$
	whenever $t_m \to \infty$ and $x_m \in B (\theta_{-t} \omega)$ for any given bounded random set $B \in \mathscr{D}_X$.
\end{definition}

\begin{definition} \label{sc7}
	A random set $\mathcal{A} \in \mathscr{D}_X$ is called a $\textit{random attractor}$ in $\mathscr{D}_X$ for a given random dynamical system $\varphi$ over the metric dynamical system $(\mathfrak{Q}, \mathcal{F}, P, \{{\theta_t}\}_{t \in \mathbb{R}})$, if the following conditions are satisfied for all $\om \in \gw$:
	
	(i) $\mathcal{A}$ is a compact random set.
	
	(ii) $\mathcal{A}$ is invariant in the sense that 
	$$
	\varphi (t, \omega, \mathcal{A}(\omega)) = \mathcal{A}(\theta_t \omega), \quad \text{for all} \;\; t \geq 0.
	$$
	
	(iii) $\mathcal{A}$  attracts every $B \in \mathscr{D}_X$ in the pullback sense that 
	$$
	\lim_{t \to \infty} dist_X (\varphi (t, \theta_{-t} \omega, B (\theta_{-t} \omega)), \mathcal{A} (\omega)) = 0,
	$$
	where $dist_X (\cdot, \cdot)$ is the Hausdorff semi-distance with respect to the $X$-norm. $\mathscr{D}_X$ is called the $\textit{basin}$ of attraction for the attractor $\mathcal{A}$.
\end{definition}	

The existence of random attractors for continuous random dynamical systems has been investigated by many authors, cf. \cite{Ar, BLW, Ch, CDF, CF, SH, Sm, Y14, Y17}.  The following theorem on the existence of random attractors will be used.

\begin{theorem} \bl{sc8}
	Given a Banach space $X$ and a family $\mathscr{D}_X$ of random sets in $X$, let $\varphi$ be a continuous random dynamical system on $X$ over the metric dynamical system $(\mathfrak{Q}, \mathcal{F}, P, \{{\theta_t}\}_{t \in \mathbb{R}})$. If the following two conditions are satisfied\textup{:}
	
	\textup{(i)} there exists a closed pullback absorbing set $K = \{K (\omega)\}_{\omega \in \mathfrak{Q}} \in \mathscr{D}_X$ for the cocycle $\varphi$,
	
	\textup{(ii)} the cocycle $\varphi$ is pullback asymptotically compact with respect to $\mathscr{D}_X$, 
	
	\noindent then there exists a unique random attractor $\mathcal{A} = \{\mathcal{A} (\omega)\}_{ \omega \in \mathfrak{Q}} \in \mathscr{D}_X$ for the random dynamical system $\varphi$.  The random attractor $\mathcal{A}$ is given by
	\beq \bl{omK}
	\mathcal{A} (\omega) = \bigcap_{\tau \geq 0} \; {\overline{\bigcup_{ t \geq \tau} \varphi (t, \theta_{-t} \omega, K (\theta_{-t} \omega))}}, \quad \om \in \mathfrak{Q}.
	\eeq
\end{theorem}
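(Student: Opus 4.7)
My plan is to verify that the set-valued map $\mathcal{A}$ defined by formula \eqref{omK}, namely the pullback $\omega$-limit set of the absorbing random set $K$, satisfies all three requirements of Definition~\ref{sc7}, and then to argue uniqueness. I would open with the routine pointwise characterization
\[
y \in \mathcal{A}(\omega) \iff \text{there exist } t_n \to \infty \text{ and } \xi_n \in K(\theta_{-t_n}\omega) \text{ with } \varphi(t_n,\theta_{-t_n}\omega,\xi_n) \to y \text{ in } X,
\]
which follows from unpacking the closure of a union. Combined with the pullback asymptotic compactness (hypothesis (ii)), this characterization immediately yields nonemptiness and compactness of $\mathcal{A}(\omega)$: every sequence drawn from $\mathcal{A}(\omega)$ admits, after a diagonal extraction on its defining representatives, a subsequence converging in $X$ to a point that is again a pullback $\omega$-limit point of $K$, hence again in $\mathcal{A}(\omega)$. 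Measurability of $\omega \mapsto \mathcal{A}(\omega)$ as a random set in $\mathscr{D}_X$ follows by a standard measurable selection argument using the measurability of $\varphi$ and $K$.

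Next I would establish pullback attraction of every $B \in \mathscr{D}_X$. The case $B = K$ is essentially tautological: any sequence $\{\varphi(t_n,\theta_{-t_n}\omega,\xi_n)\}$ with $\xi_n \in K(\theta_{-t_n}\omega)$ is precompact by (ii), and each cluster point lies in $\mathcal{A}(\omega)$ by the above characterization, so the Hausdorff semi-distance to $\mathcal{A}(\omega)$ vanishes as $t_n \to \infty$. For a general bounded random set $B$ I would apply the cocycle identity
\[
\varphi(t_n,\theta_{-t_n}\omega,x_n) = \varphi(s,\theta_{-s}\omega,\varphi(t_n - s,\theta_{-(t_n-s)}(\theta_{-s}\omega),x_n)),
\]
valid for each $s \in [0,t_n]$, and invoke hypothesis (i) at the base point $\theta_{-s}\omega$: once $t_n - s \geq T_B(\theta_{-s}\omega)$, the inner argument lies in $K(\theta_{-s}\omega)$. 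Consequently $\varphi(t_n,\theta_{-t_n}\omega,x_n)$ eventually belongs to $\varphi(s,\theta_{-s}\omega,K(\theta_{-s}\omega))$, a set that is attracted to $\mathcal{A}(\omega)$ as $s \to \infty$ by the case already handled. A diagonal argument with $s = s_n \to \infty$ growing slowly enough that $t_n - s_n \geq T_B(\theta_{-s_n}\omega)$ eventually, upgrades this to pullback attraction of $B$.

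For invariance, the continuity of $\varphi(t,\omega,\cdot)$ and the cocycle property yield, for any representing sequence $\varphi(t_n,\theta_{-t_n}\omega,\xi_n) \to y \in \mathcal{A}(\omega)$,
\[
\varphi(t,\omega,y) = \lim_{n\to\infty}\varphi(t+t_n,\theta_{-(t+t_n)}(\theta_t\omega),\xi_n) \in \mathcal{A}(\theta_t\omega),
\]
while the reverse inclusion is obtained by extracting subsequential preimages under $\varphi(t,\omega,\cdot)$ using (ii). Uniqueness is then standard: any two random attractors would each pullback-attract the other while being compact and invariant, forcing them to coincide. The main obstacle I anticipate is the cocycle bookkeeping in the second paragraph: because $T_B(\theta_{-s}\omega)$ depends on $s$ through the base point, the diagonal choice of $s_n$ must balance the growth of $T_B(\theta_{-s_n}\omega)$ against $t_n - s_n$, and it is precisely at this juncture that hypotheses (i) and (ii) must be combined in the only order that works.
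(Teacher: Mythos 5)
The paper itself does not write out a proof of Theorem~\ref{sc8}: it disposes of it by citation to \cite{BLL, Ch, CF, SH}, with the single delicate point --- the $\mathcal{F}$-measurability of the $\omega$-limit set in \eqref{omK} --- outsourced to \cite[Theorem 2.9]{Y17}. Your sketch reconstructs precisely the classical Crauel--Flandoli argument that those references contain: the sequential characterization of $\mathcal{A}(\omega)$, nonemptiness and compactness from (ii), attraction of $K$ first and of a general $B$ via the cocycle identity with absorption at the shifted base point $\theta_{-s}\omega$, invariance by continuity plus subsequential preimages, and uniqueness from mutual attraction of compact invariant sets. These steps are all sound; your cocycle identity and the bookkeeping condition $t_n - s \geq T_B(\theta_{-s}\omega)$ check out. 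The diagonal extraction you flag as the main obstacle can in fact be avoided: Definition~\ref{sc6} states pullback asymptotic compactness for \emph{arbitrary} bounded $B \in \mathscr{D}_X$, not only for $K$, so if attraction of $B$ failed along some $t_n \to \infty$ one applies (ii) directly to $B$ to extract a convergent subsequence and then uses absorption once to see that the limit is an $\omega$-limit point of $K$ --- a contradiction, with no balancing of $s_n$ against $T_B(\theta_{-s_n}\omega)$ needed.

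Two genuine soft spots remain. First, the measurability of $\omega \mapsto \mathcal{A}(\omega)$ is exactly the point the paper declines to treat as routine: \eqref{omK} is an intersection over uncountably many $\tau$ of closures of unions over uncountably many $t$, so one must reduce to countable index sets (using continuity of $\varphi$ in $t$ and separability of $X$, or the criterion that $\omega \mapsto \dist(x, \mathcal{A}(\omega))$ be measurable for $x$ in a countable dense set); this is why \cite[Theorem 2.9]{Y17} is cited, and your one-line appeal to ``a standard measurable selection argument'' would not pass as written. Second, you never verify the asserted membership $\mathcal{A} \in \mathscr{D}_X$: since $K \in \mathscr{D}_X$ absorbs itself, for $\tau \geq T_K(\omega)$ one has $\bigcup_{t \geq \tau} \varphi(t, \theta_{-t}\omega, K(\theta_{-t}\omega)) \subset K(\omega)$, and $K(\omega)$ is closed, whence $\mathcal{A}(\omega) \subset K(\omega)$; temperedness of $\mathcal{A}$ then follows from that of $K$ together with the inclusion-closedness of the universe. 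With these two points patched, your proof is complete and coincides with the argument in the sources the paper cites.
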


\begin{proof}
	The proof is seen in \cite{BLL, Ch, CF, SH} except the $\mathcal{F}$-measurability of the $\om$-limit set of $K(\theta_{-t} \om)$ in \eqref{omK} is shown in \cite[Theorem 2.9]{Y17}.
\end{proof}

\subsection{\textbf{Formulation and Random Environment}}
We now formulate the initial-boundary value problem \eqref{suq1}--\eqref{inc3} of the stochastic Hindmarsh-Rose equations with the additive noise in a framework of the Hilbert spaces
\beq \bl{srd1}
H = L^2 (\gw, \mathbb{R}^3) \quad \text{and} \quad E = H^1 (\gw, \mathbb{R}^3).
\eeq
The norm and inner-product of $H$ or $L^2 (\gw)$ will be denoted by $\| \cdot\|$ and $\inpt{\cdot, \cdot}$, respectively. The norm of space $E$ will be denoted by $\| \cdot\|_E$. The norm of $L^p (\gw) $ or $L^p (\gw, \mathbb{R}^3)$ will be denoted by $\| \cdot\|_{L^p}$ for $p \neq 2$. W use $| \cdot|$ to denote a vector norm in Euclidean spaces. A stochastic process will be denoted by $Y(t), Y_t$ or by $Y (t, \omega)$ to indicate the sample path, whichever is convenient depending on the context.

The nonpositive self-adjoint linear differential operator
\begin{equation} \label{srd2a}
A =
\begin{pmatrix}
d_1 \gd  & 0   & 0 \\[3pt]
0 & d_2 \gd  & 0 \\[3pt]
0 & 0 & d_3 \gd
\end{pmatrix}
: D(A) \rightarrow H,
\end{equation}
where 
\begin{equation*}
D(A) = \left\{(u, v, z) \in H^2 (\gw, \mathbb{R}^3): \frac{\partial u}{\partial \nu} = \frac{\partial v}{\partial \nu} = \frac{\partial z}{\partial \nu} = 0 \;\, \text{on} \; \, \partial \gw \right\}
\end{equation*}
is the generator of an analytic contraction $C_0$-semigroup $\{e^{At}\}_{t \geq 0}$ on the Hilbert space $H$. By the fact that $H^{1}(\gw) \hookrightarrow L^6(\gw)$ is a continuous Sobolev imbedding for space dimension $n \leq 3$, the nonlinear mapping 
\begin{equation} \label{srd3a}
f(u,v, z) =
\begin{pmatrix}
\vp (u) + v - z + J \\[4pt]
\psi (u) - v,  \\[4pt]
q (u - c) - rz
\end{pmatrix}
: E \longrightarrow H
\end{equation}
is locally Lipschitz continuous. Let $W(t) = \text{col}\, (W_1(t), W_2(t), W_3(t))$ and 
$$
	\Lambda (h) \ =
		\begin{pmatrix}
		a_1 h_1(x)  & 0   & 0 \\[3pt]
		0 & a_2 h_2(x)  & 0 \\[3pt]
		0 & 0 & a_3 h_3(x)
\end{pmatrix}.
$$
Then the initial-boundary value problem \eqref{suq1}--\eqref{inc3} is formulated into an initial value  problem of the following stochastic Hindmarsh-Rose evolutionary equation driven by the additive noise:
\begin{equation} \label{srd4a}
\begin{split}
dg &= A \,g \,dt + f(g)\, dt + \Lambda (h)\, dW, \quad t > \tau \in \mathbb{R},\\[3pt]
&g (\tau, \om, g_0) = g_0 = (u_0, v_0, z_0) \in H.
\end{split}
\end{equation}
The solutions of \eqref{srd4a} is denoted by 
$$
	g(t, \omega, g_0) = \text{col} \, (u(t, \cdot, \omega, g_0), v(t, \cdot, \omega, g_0), z(t, \cdot, \omega, g_0))
$$
where $(u, v, z)$ is the vector of solutions to the problem \eqref{suq1}-\eqref{inc3}, and dot stands for the hidden spatial variable $x$, and $\omega \in \mathfrak{Q}$.

Specifically assume that $\{W_i(t): i = 1, 2, 3\}_{t \in \mathbb{R}}$ are independent two-sided standard Wiener process (Brownian motion) in the canonical probability space $(\mathfrak{Q}, \mathcal{F}, P)$, where the sample space
\beq \bl{scrd5a}
\mathfrak{Q} = \{\om (t) = (\om_1 (t), \om_2 (t), \om_3 (t)) \in C(\mathbb{R}, \mathbb{R}^3) : \om (0) = 0\},
\eeq
the $\sigma$-algebra $\mathcal{F}$ is generated by the compact-open topology endowed in $\mathfrak{Q}$, and $P$ is the corresponding Wiener measure \cite{Ar, Ch, CF, Ok} on $\mathcal{F}$. Define a family of $P$-preserving time-shift transformations $\{\theta_t\}_{t \in \mathbb{R}}$ by
\beq \bl{srd6a}
(\theta_t\, \omega) (\cdot) = \omega (\cdot + t) - \omega (t), \quad \text{for} \; \; t \in \mathbb{R},\; \omega \in \mathfrak{Q}.
\eeq
Then $(\mathfrak{Q}, \mathcal{F}, P, \{\theta_t\}_{t \in \mathbb{R}})$ is a metric dynamical system and the stochastic process $\{W(t, \om) = \om(t): t \in \mathbb{R}, \om \in \mathfrak{Q}\}$ is a three-dimensional canonical Wiener process.

\begin{proposition}\cite{Ok} \label{sc9}
	The Wiener process $W(t)$ defined above has the asymptotically sublinear growth property,
	\beq \bl{srd7}
	\lim_{t \to \pm \infty} \frac{|W(t)|}{|t|} = 0, \quad \text{a.s.}
	\eeq
\end{proposition}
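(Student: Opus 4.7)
The plan is to reduce this classical fact to the time-inversion invariance of Brownian motion, applied componentwise. Since $|W(t)| \leq |W_1(t)| + |W_2(t)| + |W_3(t)|$, it suffices to establish $\lim_{t \to \pm\infty} W_i(t)/t = 0$ a.s.\ for each $i = 1, 2, 3$, and then combine via the union of three null sets.

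First I would handle the limit at $+\infty$. For a standard one-dimensional Wiener process $W_i$, I would invoke the time-inversion property: the process $\tilde{W}_i$ defined by $\tilde{W}_i(s) = s\, W_i(1/s)$ for $s > 0$ and $\tilde{W}_i(0) = 0$ is again a standard Brownian motion, in particular almost surely continuous at $s = 0$. Setting $s = 1/t$ gives
\[
\frac{W_i(t)}{t} = \tilde{W}_i(1/t) \longrightarrow \tilde{W}_i(0) = 0 \quad \text{as } t \to +\infty, \quad \text{a.s.}
\]
For the limit at $-\infty$, I would exploit that $W_i$ is a two-sided Brownian motion, so the process $\hat{W}_i(t) := W_i(-t)$, $t \geq 0$, is itself a standard one-sided Brownian motion; applying the same time-inversion argument to $\hat{W}_i$ yields $W_i(t)/t = -\hat{W}_i(-t)/(-t) \to 0$ as $t \to -\infty$ almost surely.

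An alternative route, if one prefers to avoid quoting time inversion, is a Borel--Cantelli argument: for each integer $n \geq 1$, Doob's maximal inequality bounds $P\bigl(\max_{n \leq s \leq n+1} |W_i(s)| > \varepsilon n\bigr) \leq C/(\varepsilon^2 n)$ up to lower order, whose refinement via Gaussian tail estimates yields a summable sequence; Borel--Cantelli then gives $|W_i(t)|/t \to 0$ a.s.\ along integers, and continuity of sample paths upgrades this to the continuous limit.

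The only genuinely delicate point is ensuring that the exceptional null sets obtained from the three independent components and the two directions $t \to +\infty$ and $t \to -\infty$ can all be discarded simultaneously, which is immediate since a finite union of $P$-null sets is $P$-null. Since no expansion beyond standard Brownian motion facts is required and the statement carries the citation to \cite{Ok}, this outline would justify the proposition without any further estimates specific to the Hindmarsh--Rose system.
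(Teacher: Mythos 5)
Your proof is correct, but there is nothing in the paper to compare it against: the paper does not prove Proposition \ref{sc9} at all, it simply quotes the result from \O ksendal's book \cite{Ok} (it is the strong law of large numbers for Brownian motion, used later only to guarantee convergence and temperedness of the Ornstein--Uhlenbeck integrals). So you have supplied a genuine proof where the authors supply a citation. Your main route is the standard textbook one: the componentwise reduction via $|W(t)| \leq |W_1(t)|+|W_2(t)|+|W_3(t)|$ is fine, the time-inversion argument $W_i(t)/t = \tilde{W}_i(1/t) \to \tilde{W}_i(0) = 0$ is exactly right (the only substantive input being that $\tilde{W}_i(s) = s\,W_i(1/s)$, $\tilde{W}_i(0)=0$, is again a Brownian motion, hence a.s.\ continuous at $0$), and your handling of $t \to -\infty$ via the reflected process $\hat{W}_i(t) = W_i(-t)$ is legitimate in the paper's canonical two-sided setup \eqref{scrd5a}, where the negative-time restriction is indeed a standard one-sided Wiener process. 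The finite union of null sets is, as you say, immediate.

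One caution on your alternative route: as written, Doob's $L^2$ maximal inequality gives only $P\bigl(\max_{n \leq s \leq n+1} |W_i(s)| > \varepsilon n\bigr) \leq C/(\varepsilon^2 n)$, which is \emph{not} summable, so Borel--Cantelli does not apply at that stage. You flag this yourself, and the fix is standard: either use the reflection principle plus Gaussian tails to get bounds of order $e^{-c\varepsilon^2 n}$, or run Doob along the dyadic subsequence $t = 2^k$ (where $\sum_k 2^{-k}$ converges) and interpolate; alternatively, note $W_i(n)/n \to 0$ by the classical SLLN applied to the i.i.d.\ increments $W_i(k)-W_i(k-1)$ and control the oscillation $\sup_{n \leq s \leq n+1}|W_i(s)-W_i(n)|$ by Borel--Cantelli with Gaussian tails. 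Any of these completes that variant; your primary time-inversion argument needs no repair.
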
	

For a given $\gk > 0$ to be specified, introduce the Ornstein-Uhlenbeck process $\G(\theta_t \om) = \text{col}\, (\G_1(\theta_t \om_1), \G_2(\theta_t \om_2), \G_3(\theta_t \om_3))$, which is defined by
\beq \bl{snd1}
\begin{split}
	&\G_i(t, \om_i) = - \kappa \int_{-\infty}^{t} e^{-\gk (t - s)} dW_i(s, \om_i) = - \gk \int_{-\infty}^{0} e^{\gk \xi} dW_i (t+\xi, \om_i) \\
        = - \gk \int_{-\infty}^{0} &e^{\gk s} dW_i (s, \theta_t \om_i) = - \gk \int_{-\infty}^0 e^{\gk s} (\theta_t \om_i)(s) ds = \G_i(0, \theta_t \om_i) := \G_i(\theta_t \om_i).
\end{split}
\eeq
The Ornstein-Uhlenbeck processes $\G_i (t, \om_i) = \G_i (\theta_t \om_i), i = 1, 2, 3,$ satisfy the scalar stochastic differential equation
\beq \bl{snd2}
	d\G_i = -\gk \G_i\,dt + dW_i, \quad \G(-\infty) = 0.
\eeq

Define $\G^h(\theta_t \om) = \text{col} \,(\G_1^h(\theta_t \om_1), \G_2^h(\theta_t \om_2), \G_3^h(\theta_t \om_3))$ to be the corresponding abstract Ornstein-Uhlenbeck process 
\beq \bl{snd3}
	\G_i^h(\theta_t \om_i) = h_i(x) \G_i(\theta_t \om_i), \quad 1 \leq i \leq 3.
\eeq
For any $p \geq 2$ and any $\gk > 0$, the Ornstein-Uhlenbeck process $\G(\theta_t \om)$ is tempered in $L^p( \mathbb{R},  \mathbb{R}^3)$. It means that for any $\ve >0$,
\beq \bl{snd4}
	\lim_{|t| \to \infty} e^{-\ve |t|} \,|\G(\theta_t \om)|^p = 0.
\eeq
Thus the abstract Ornstein-Uhlenbeck process $\G^h(\theta_t \om)$ satisfies the similar property: if $h_i \in L^p(\gw), 1 \leq i \leq 3$, then for any $\ve > 0$,
\beq \bl{snd6}
		\lim_{|t| \to \infty} e^{-\ve |t|} \,\|\G^h(\theta_t \om)\|_{L^p(\gw, \mathbb{R}^3)}^p = 0.
\eeq

\section{\textbf{Hindmarsh-Rose Cocycle and Pullback Absorbing Property}}

The first step to treat the stochastic PDE problem \eqref{suq1}--\eqref{inc3} is to convert the system to random PDE, which has random coefficients and random initial data, by the additive transformation:
\beq \bl{snd8}
\begin{split}
	&\, U(t, \om; \tau, g_0) = u(t, \cdot, \om, \tau, g_0) - \G_1^h(\theta_t \om_1),\\
	&\, V(t, \om; \tau, g_0) = v(t, \cdot, \om, \tau,  g_0) - \G_2^h(\theta_t \om_2),\\
	&\, Z(t, \om; \tau, g_0) = z(t, \cdot, \om, \tau, g_0) - \G_3^h(\theta_t \om_3),
\end{split}
\eeq
where $\om = (\om_1, \om_2, \om_3)$, and dot stands for the hidden spatial variable $x$. 

Then the initial-boundary value problem \eqref{suq1}--\eqref{inc3} is converted to the following system of random partial differential equations:
\begin{align}
\frac{\pdr U}{\pdr t} & = d_1 \gd U + d_1 \gd h_1 \G_1(\theta_t \om_1) +  a (U + \G_1^h(\theta_t \om_1))^2 - b (U + \G_1^h(\theta_t \om_1))^3 \nonumber \\
&\quad + (V + \G_2^h(\theta_t \om_2)) - (Z + \G_3^h(\theta_t \om_3)) + J + \gk \G_1^h(\theta_t \om_1), \bl{sUq1} \\
\frac{\pdr V}{\pdr t} & = d_2 \gd V + d_2  \gd h_2 \G_2(\theta_t \om_2) + \alpha - \beta (U + \G_1^h(\theta_t \om_1))^2 \nonumber \\
& \quad - (V + \G_2^h(\theta_t \om_2))+ \gk \G_2^h(\theta_t \om_2) , \bl{sVq1} \\
\frac{\pdr Z}{\pdr t} & = d_3 \gd Z + d_3 \gd h_3 \G_3(\theta_t \om_3) + q (U + \G_1^h(\theta_t \om_1) - c) \nonumber \\ 
&\quad - r (Z + \G_3^h(\theta_t \om_3)) + \gk \G_3^h(\theta_t \om_3)  , \bl{sZq1}
\end{align}
for $\om \in \mathfrak{Q}, \, t > \tau,\, x \in \gw \subset \mathbb{R}^{n}$ ($n \leq 2$), with the Neumann boundary condition
\begin{equation} \label{nbc4}
	\frac{\partial U}{\partial \nu} (t, x, \om) = 0, \; \frac{\partial V}{\partial \nu}(t, x, \om) = 0, \; \frac{\partial Z}{\partial \nu}(t, x, \om) = 0, \quad t \geq \tau \in \mathbb{R}, \;  x \in \partial \gw,
\end{equation}
and an initial condition 
\begin{equation} \bl{inc4}
(U, V, Z)(\tau, \om) = g_0 - \G^h(\theta_\tau \om) = (u_0 - \G_1^h(\theta_\tau \om_1),\; v_0 - \G_2^h(\theta_\tau \om_2),\; z_0 - \G_3^h(\theta_\tau \om_3)).
\end{equation}

The initial-boundary value problem \eqref{sUq1}-\eqref{inc4} can be written as an initial value problem of the pathwise non-autonomous random evolutionary equation
\beq \bl{sGq1}
	\begin{split}
	\frac{\pdr G}{\pdr t}  = &\,  A G  +  \mathscr{F} (G, \theta_t\, \om), \quad t \geq \tau \in \mathbb{R}, \; \om \in \mathfrak{Q}, \\[3pt]
	&G (0, \om; \tau, g_0) = g_0 = (u_0, v_0, z_0) \in H.
	\end{split}
\eeq 
We define the weak solution of the initial value problem \eqref{sGq1},
\beq \bl{snd9}
	G (t, \om; \tau, g_0) = (U(t, \om; \tau, g_0), V(t, \om; \tau, g_0), Z(t, \om; \tau, g_0)),
\eeq
to be the weak solution of the nonautonomous initial-boundary problem \eqref{sUq1}-\eqref{inc4}, specified in \cite[Definition 2.1]{Y12}. 

By conducting estimates on the Galerkin approximate solutions and through the compactness argument outlined in \cite[Chapter II and XV]{CV} with some adaptations, we can prove the local existence and uniqueness of the weal solution $G(t, \om) = G(t, \om; \tau, g_0)$ in the space $H$ on a time interval $[\tau, T_{\text{max}}(\tau, \om, g_0))$ for some $\tau  < T_{\text{max}}(\tau, \om, g_0) \leq \infty$, and the solution continuously depends on the initial data. Further by the parabolic regularity \cite[Theorem 48.5]{SY}, every weak solution becomes a strong solution in the space $E$ for $t > \tau$ in the existence interval and has the regularity property 
\beq 
	G \in C([\tau, T_{max}), H) \cap C^1 ((\tau, T_{max}), H) \cap L^2_{loc} ([\tau, T_{max}), E).
\eeq

\subsection{\textbf{Global Existence of Pullback Solutions}}

The converted system of random partial differential equations \eqref{sUq1}-\eqref{sZq1} is non-autonomous by nature and we shall deal with the pullback weak solutions to investigate the random dynamics.

\begin{lemma}
	For any $\tau \in \mathbb{R},\, \om \in \mathfrak{Q}$, and any given initial data $g_0 = (u_0, v_0, z_0) \in H$, the weak solution $G(t,\om;\tau,g_0)$ defined in \eqref{snd9} of the initial boundary-problem of the random PDE \eqref{sUq1}-\eqref{inc4} uniquely exists on $[\tau,\infty)$. Consequently, the weak solution $(u,v,z)(t, \theta_\tau \om; \tau, g_0) = G(t, \theta_\tau \om; \tau, g_0) + \G^h (\theta_t \om)$ of the original problem \eqref{suq1}--\eqref{inc3} uniquely exists on $[\tau,\infty)$ and continuously depends on the initial data.
\end{lemma}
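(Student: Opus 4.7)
The plan is to combine the already-stated local well-posedness (obtained via Galerkin approximation and the compactness argument cited just before the lemma) with a pathwise a priori $L^2$ bound that excludes finite-time blow-up. The standard continuation principle then extends the local solution from $[\tau,T_{\max})$ to $[\tau,\infty)$, and continuous dependence on the initial data follows from a parallel energy estimate on the difference of two solutions with different initial data.

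To obtain the a priori bound I would test the equations \eqref{sUq1}--\eqref{sZq1} in $L^2(\gw)$ against $U$, $V$, and $Z$ respectively, integrate by parts in the diffusion terms using the Neumann condition \eqref{nbc4}, and add. The diffusive part contributes $d_1\|\nb U\|^2 + d_2\|\nb V\|^2 + d_3\|\nb Z\|^2$ as a dissipative term on the left side. The critical nonlinear dissipation comes from the leading piece of $-b(U+\G_1^h)^3$: pairing $-bU^3$ with $U$ yields $-b\|U\|_{L^4}^4$, and this quartic dissipation is precisely what will absorb the remaining polynomial cross-terms coming from $\vp$ and $\psi$. The main technical obstacle is therefore the book-keeping of these cross and perturbation terms. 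Specifically, expanding $-b(U+\G_1^h)^3$ produces three lower-order pieces $-b\binom{3}{k}U^{3-k}(\G_1^h)^k$ for $k=1,2,3$; the coupling $-\gb(U+\G_1^h)^2 V$ from \eqref{sVq1} generates the genuinely quartic-looking term $-\gb\int U^2 V\,dx$ plus subleading pieces; and the quadratic $aU^2$ in \eqref{sUq1} produces $a\int U^3\,dx$. Each is handled by Young's inequality in the form $|U^2V|\le \ve U^4 + C_\ve V^2$ and $|U^j(\G_1^h)^{3-j}|\le \ve U^4 + C_\ve (\G_1^h)^4$; the forcings $d_i\inpt{\gd h_i\,\G_i(\gz_t\om_i),\cdot}$, $\gk\G_i^h$ and the linear terms $qU$, $-rZ$, $J$, $\alpha$ enter at most linearly after Cauchy--Schwarz. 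Choosing $\ve$ small enough that the accumulated quartic remainders are bounded by $\tfrac12\,b\|U\|_{L^4}^4$ delivers the pathwise differential inequality
\beq
\frac{d}{dt}\bigl(\|U\|^2+\|V\|^2+\|Z\|^2\bigr) \;\le\; C_1\bigl(\|U\|^2+\|V\|^2+\|Z\|^2\bigr) + F(\gz_t\om),
\eeq
with $F(\gz_t\om)$ a polynomial in $\|\G^h(\gz_t\om)\|_{L^p(\gw)}$, $p\le 4$, which is continuous and hence locally bounded in $t$ since $h_i\in W^{2,4}(\gw)$ and the Ornstein--Uhlenbeck paths are continuous.

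Gronwall's inequality on any finite interval $[\tau,T]$ then shows that $\|G(t,\om;\tau,g_0)\|$ remains bounded, ruling out $T_{\max}<\infty$ and giving the global weak solution $G$ on $[\tau,\infty)$. The same energy identity applied to the difference $G^{(1)}-G^{(2)}$ of two solutions, using the local polynomial-Lipschitz structure of $\vp$ and $\psi$ together with the quartic dissipation to absorb the differences of cubic and quadratic terms, yields continuous dependence on $g_0$. Finally, inverting the additive transformation \eqref{snd8} to recover $(u,v,z)(t,\gz_\tau\om;\tau,g_0) = G(t,\gz_\tau\om;\tau,g_0) + \G^h(\gz_t\om)$ transfers global existence, uniqueness, and continuous dependence back to the original stochastic Hindmarsh--Rose system \eqref{suq1}--\eqref{inc3}, completing the proof.
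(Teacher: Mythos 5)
Your proposal is correct and belongs to the same family of argument as the paper's proof: test the transformed system in $L^2$ against the unknowns, extract the quartic dissipation from the cubic term $-bu^3$, absorb the cross-terms by Young's inequality, apply Gronwall, and invoke continuation plus the inverse of the additive transformation. The one substantive difference is in how the dangerous coupling $-\gb u^2 v$ is neutralized, and what the resulting differential inequality looks like. You use unit weights and split $|U^2V|\le \ve U^4 + C_\ve V^2$ with $\ve$ small relative to $b$, dumping $C_\ve\|V\|^2$ into the Gronwall constant; this yields a growth-type inequality $\frac{d}{dt}\mathcal{E}\le C_1\mathcal{E}+F(\theta_t\om)$, which indeed suffices to exclude finite-time blow-up and prove this lemma. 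The paper instead tests the $U$-equation against $c_1 U$ with the specific weight $c_1=\frac{1}{b}\bigl(2\gb^2+\frac{11}{8}\bigr)$, keeps the nonlinear terms grouped in the original variables $u,v$ (see \eqref{snd11}--\eqref{snd14}), and arranges all remainders to be dominated by the dissipation, arriving at the decay inequality \eqref{snd17}--\eqref{snd18} with the factor $e^{-\sigma(t-\tau)}$. That sharper, uniformly dissipative form is not needed for mere global existence, but it is reused verbatim to build the tempered bound $r_0(\om)$, the absorbing radius $R_0(\om)$ in Lemma \ref{lm2}, and the pullback absorbing set in Theorem \ref{Thm2}; with your growth-type estimate those later steps would have to be redone. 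Your additional difference-estimate for continuous dependence (using monotonicity of the cubic, i.e.\ $u_1^2+u_1u_2+u_2^2\ge 0$, to make the difference of cubes dissipative) is sound, and in fact does more than the paper, which inherits continuous dependence from the local Galerkin theory cited before the lemma rather than proving it within this proof.
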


\begin{proof}
	Take the $H$ inner-products $\langle \eqref{sUq1}, c_1 U(t) \rangle$, $\langle \eqref{sVq1}, V(t) \rangle$ and $\langle \eqref{sZq1}, Z(t) \rangle$ with a constant $c_1 > 0$ to be specified later and then sum up the resulting equalities. Recall that $U = u - \G_1^h$, $V = v - \G_2^h$ and $Z = z - \G_3^h$. We obtain
	\beq \bl{snd11}
	\begin{split}
		&\frac{1}{2}\frac{d}{dt} \left(c_1 \|U\|^2+ \|V\|^2 + \|Z\|^2\right) + \left(c_1 d_1 \|\nb U\|^2 + d_2\|\nb V\|^2 + d_3\|\nb Z\|^2\right) \\[6pt]
		= &\, \int_{\gw} c_1 U \left[d_1 \gd h_1 \G_1(\theta_t \om_1) + k \G_1^h(\theta_t \om_1) + \G_2^h(\theta_t \om_2) - \G_3^h(\theta_t \om_3)\right]\, dx\\[2pt]
		+&\, \int_{\gw} V \left[d_2 \gd h_2 \G_2(\theta_t \om_2) - \G_2^h(\theta_t \om_2) + k \G_2^h(\theta_t \om_2)\right]\, dx\\[2pt]
		+&\, \int_{\gw} Z \left[d_3 \gd h_3 \G_3(\theta_t \om_3) + q \G_1^h(\theta_t \om_1) - r \G_3^h(\theta_t \om_3) + k \G_3^h(\theta_t \om_3)\right]\, dx\\[2pt]
		+&\,  \int_{\gw} \left[c_1 UV - c_1 ZU - V^2 + q(U - c)Z - rZ^2 + c_1 J U + \alpha V\right]\, dx\\[2pt]
		+&\, \int_{\gw} \left\{(c_1 au^3 - c_1 bu^4 - \beta vu^2) + [c_1 \G_1^h(\theta_t \om_1)(bu^3 - au^2) + \beta \G_2^h(\theta_t \om_2) u^2]\right\} dx.
	\end{split}
	\eeq
	For the first three integral terms on the right-hand side of equality \eqref{snd11}, we have
	
	\beq \bl{snd12}
	\begin{split}
		&\, \int_{\gw} c_1 U \left[d_1 \gd h_1 \G_1(\theta_t \om_1) + k \G_1^h(\theta_t \om_1) + \G_2^h(\theta_t \om_2) - \G_3^h(\theta_t \om_3)\right]\, dx\\[7pt]
		&\, + \int_{\gw} V \left[d_2 \gd h_2 \G_2(\theta_t \om_2) - \G_2^h(\theta_t \om_2) + k \G_2^h(\theta_t \om_2)\right]\, dx\\[7pt]
		&\, + \int_{\gw} Z \left[d_3 \gd h_3 \G_3(\theta_t \om_3) + q \G_1^h(\theta_t \om_1) - r \G_3^h(\theta_t \om_3) + k \G_3^h(\theta_t \om_3)\right]\, dx\\[7pt]
		\leq &\; c(h) |\G(\theta_t \om)|^2 + \frac{c_1^2}{2} \int_{\gw} U^2 dx + \frac{1}{12} \int_{\gw} V^2 dx + \frac{r}{6} \int_{\gw} Z^2 dx,
	\end{split}
	\eeq
	where $c(h) > 0$ is a constant depending on the functions $h(x) = (h_1 (x), h_2(x), h_3(x))$.  Note that 
	$$
	u^4 = \left[\left(U + \G_1^h(\theta_t \om_1)\right)^2\right]^2 \leq \left[2\left(u^2 + \left(\G_1^h(\theta_t \om_1)\right)^2\right)\right]^2 \leq 8\left[u^4 + \left(\G_1^h(\theta_t \om_1)\right)^4\right].
	$$
	The 5th integral term on the right-hand side of \eqref{snd11} is
	\beq \bl{snd13}
	\begin{split}
		&\, \int_{\gw} (c_1 au^3 - c_1 bu^4 - \beta vu^2)\,dx + c_1\int_{\gw} \G_1^h(\theta_t \om_1)(bu^3 - au^2) dx + \beta \int_{\gw} \G_2^h(\theta_t \om_2) u^2 dx \\[7pt]
		= & \int_{\gw} \left[(c_1 a + c_1 b \G_1^h(\theta_t \om_1)) u^3 - c_1 bu^4 - \beta vu^2 + \left(\beta \G_2^h(\theta_t \om_2) - c_1 a \G_1^h(\theta_t \om_1)\right) u^2\right]\, dx\\[7pt]
		\leq & \int_{\gw} \left[\frac{3}{4} u^4 + \frac{1}{4} \left(c_1 a + c_1 b \G_1^h(\theta_t \om_1)\right)^4 - c_1 bu^4 \right] dx  \\[7pt]
		+  &\, \int_{\gw} \left[ 2 \beta^2 u^4  +  \frac{v^2}{8} + \left(\beta \G_2^h(\theta_t \om_2) - c_1 a \G_1^h(\theta_t \om_1)\right)^2 + \frac{u^4}{4} \right] \, dx.
	\end{split}
	\eeq
	Choose the positive constant in \eqref{snd11} and \eqref{snd13} to be 
	$$
		c_1 = \frac{1}{b} \left(2 \beta^2 + \frac{11}{8}\right)
	$$
	so that 
	$$
		\int_{\gw} (-c_1 bu^4 + 2 \beta^2 u^4)\, dx \leq -\frac{11}{8} \int_{\gw} u^4 \,dx.
	$$
	Then \eqref{snd13} becomes 
	
	\beq \bl{snd14}
	\begin{split}
		&\, \int_{\gw} (c_1 au^3 - c_1 bu^4 - \beta vu^2)\,dx + c_1\int_{\gw} \G_1^h(\theta_t \om_1)(bu^3 - au^2)\,dx  + \beta \int_{\gw} \G_2^h(\theta_t \om_2) u^2\,dx \\[2pt]
		&\, \leq - \frac{3}{8} \int_{\gw}  u^4\, dx + \frac{1}{4} \int_{\gw} 8 \left[(c_1 a)^4 + (c_1 b)^4(\G_1^h(\theta_t \om_1))^4\right]\, dx + \int_{\gw} \frac{v^2}{8}\,dx \\[2pt]
		&\quad+ \int_{\gw} 2\left[\beta^2 (\G_2^h(\theta_t \om_2))^2 + (c_1 a_1)^2 (\G_1^h(\theta_t \om_1))^2\right]\,dx\\[5pt]
		&\, \leq - \frac{3}{8} \int_{\gw} \left[U + \G_1^h(\theta_t \om_1)\right]^4 \,dx + \frac{1}{8} \int_{\gw} \left[V + \G_2^h(\theta_t \om_2)\right]^2 \,dx + 2(c_1 a)^4 |\gw| \\[2pt]
		&\quad + 2(c_1 b)^4 \int_{\gw} \left(\G_1^h(\theta_t \om_1)\right)^4 \,dx + [2\beta^2 +(c_1 a)^2]\, \|\G^h(\theta_t \om)\|^2\\[2pt]
		&\, \leq -3 \int_{\gw} \left[U^4 + (\G_1^h(\theta_t \om_1))^4\right]\,dx + \frac{1}{4} \int_{\gw} \left[V^2 + (\G_2^h(\theta_t \om_2))^2\right] \,dx + 2(c_1 a)^4 |\gw| \\[7pt]
		&\quad+ 2(c_1 b)^4 \|\G^h(\theta_t \om)\|_{L^4}^4 +  [2\beta^2 +(c_1 a)^2]\, \|\G^h(\theta_t \om)\|^2\\[5pt]
		&\, \leq -3 \int_{\gw} U^4\,dx - 3 \|\G^h(\theta_t \om)\|_{L^4}^4  + \frac{1}{4} \int_{\gw} V^2 \,dx   \\[1pt]
		&\quad + \frac{1}{4}  \|\G^h(\theta_t \om)\|^2 + 2(c_1 b)^4 \|\G^h(\theta_t \om)\|_{L^4}^4   + [2\beta^2 +(c_1 a)^2]\, \|\G^h(\theta_t \om)\|^2 + 2(c_1 a)^4 |\gw|\\[5pt]
		&\, \leq -3 \int_{\gw} U^4\,dx + \frac{1}{4} \int_{\gw} V^2 \,dx + 2(c_1 b)^4 \|\G^h(\theta_t \om)\|_{L^4}^4\\[2pt]
		&\quad + \left[2\beta^2 +(c_1 a)^2 + \frac{1}{4} \right]\|\G^h(\theta_t \om)\|^2 + 2(c_1 a)^4 |\gw|.
	\end{split}
	\eeq
	Next, the 4th integral term in \eqref{snd11} is estimated,
	\begin{gather*}
		\int_{\gw} [c_1 UV - c_1 ZU - V^2  + c_1 J U + \alpha V + q(U - c)Z - rZ^2]\, dx \\[5pt]
		\leq \int_{\gw} \left[3 c_1^2 U^2 + \frac{V^2}{12} + \frac{3 c_1^2}{2r} U^2 + \frac{r}{6} Z^2 - V^2 + \frac{c_1^2}{2} U^2 + \frac{J^2}{2} + 3 \alpha^2 + \frac{V^2}{12} \right.  \\[5pt]
		\left. + \left(\frac{3q^2}{r} (U^2 + c^2) + \frac{r}{6} Z^2 \right) - rZ^2 \right] dx.
	\end{gather*}
	Collect all the integral terms with $U^2$ involved from the above inequality to obtain
	
	\begin{equation}  \bl{snd15}
	\begin{split}
		\int_{\gw} &\left(\frac{c_1^2}{2} + 3 c_1^2 + \frac{3 c_1^2}{2r} + \frac{c_1^2}{2} + \frac{3q^2}{r} \right) U^2\, dx = \int_{\gw} \left(4 c_1^2 + \frac{3 c_1^2}{2r} + \frac{3q^2}{r}\right) U^2\,dx\\[3pt]
		&\, \leq \int_{\gw} U^4 \,dx + \left(4 c_1^2 + \frac{3 c_1^2}{2r} + \frac{3q^2}{r}\right)^2 |\gw|.
	\end{split}
	\end{equation}
Assemble all the estimate \eqref{snd12}-\eqref{snd15} into \eqref{snd11}. Then we get
	\begin{equation*}
	\begin{split}
		&\frac{1}{2}\frac{d}{dt} \left(c_1 \|U\|^2+ \|V\|^2 + \|Z\|^2\right) + \left(c_1 d_1 \|\nb U\|^2 + d_2\|\nb V\|^2 + d_3\|\nb Z\|^2\right) \\[5pt]
		\leq &\,  \int_{\gw} (1-3) U^4 dx + \int_{\gw} \left(\frac{1}{12} - 1 + \frac{1}{12} + \frac{1}{12} + \frac{1}{4} \right) V^2 dx + \int_{\gw} \left(\frac{r}{6} + \frac{r}{6} + \frac{r}{6} - r \right) Z^2 dx \\[2pt]
		&\, + c(h) |\G(\theta_t \om)|^2 + 2(c_1 b)^4 \|\G^h(\theta_t \om)\|_{L^4}^4 + \left[2\beta^2 +(c_1 a)^2 + \frac{1}{4} \right]\, \|\G^h(\theta_t \om)\|^2\\
		&\, + \left[2(c_1 a)^4 + \frac{J^2}{2} + 3 \alpha^2 + \frac{3q^2c^2}{r} + \left(4 c_1^2 + \frac{3 c_1^2}{2r} + \frac{3q^2}{r}\right)^2\right]|\gw|\\
		\leq &\, \int_{\gw} -2 U^4 \,dx - \int_{\gw} \frac{V^2}{2}\,dx - \int_{\gw} \frac{r}{2} Z^2\,dx + c(\ga) |\G(\theta_t \om)|^2 + 2(c_1 b)^4 \|\G^h(\theta_t \om)\|_{L^4}^4 \\[3pt]
		&\, + \left[2\beta^2 +(c_1 a)^2 + \frac{1}{4} \right]\|\G^h(\theta_t \om)\|^2 + N |\gw|,
	\end{split}
	\end{equation*}
	where
	$$
	N = \left[2(c_1 a)^4 + \frac{J^2}{2} + 3 \alpha^2 + \frac{3q^2c^2}{r} + \left(4 c_1^2 + \frac{3 c_1^2}{2r} + \frac{3q^2}{r}\right)^2\right].
	$$
Let $d = \text{min}\,\{d_1, d_2, d_3\}$. It follows that
	\beq \bl{snd16}
	\begin{split}
		&\, \frac{d}{dt} \left(c_1 \|U\|^2+ \|V\|^2 + \|Z\|^2\right) + 2d(c_1 \|\nb U\|^2 + \|\nb V\|^2 + \|\nb Z\|^2) \\[2pt]
		&\, + \int_{\gw} (4U^4 + V^2 + rZ^2)\, dx \\
		\leq &\; 2 c(h) |\G(\theta_t \om)|^2 + 4 (c_1 b)^4 \|\G^h(\theta_t \om)\|_{L^4}^4 + \left[4\beta^2 +(c_1 a)^2 + \frac{1}{2} \right]\|\G^h(\theta_t \om)\|^2 + 2N |\gw|.
	\end{split}
	\eeq
Since $4U^4 \geq c_1 U^2 - \frac{c_1^2}{16}$, the inequality \eqref{snd16} implies that
	\beq \bl{snd17}
	\begin{split}
		&\, \frac{d}{dt} \left(c_1 \|U\|^2+ \|V\|^2 + \|Z\|^2\right)  \\[5pt] 
		&\,+ 2d(c_1 \|\nb U\|^2 + \|\nb V\|^2 + \|\nb Z\|^2)+ c_1 \|U\|^2 + \|V\|^2 + r\|Z\|^2 \\[6pt]
		\leq &\; 2 c(h) |\G(\theta_t \om)|^2 + 4 (c_1 b)^4 \|\G^h(\theta_t \om)\|_{L^4}^4 \\[3pt]
		&\, + 2 \left[2\beta^2 +(c_1 a)^2 + \frac{1}{4} \right]\|\G^h(\theta_t \om)\|^2 + 2N |\gw| + \frac{c_1^2}{16}\, |\gw|  \\
		\leq &\; \mathscr{C}(h)\, (|\G(\theta_t \om)|^2 + |\G(\theta_t \om)|^4) + F |\gw|,
	\end{split}
	\eeq
	for $t \geq \tau,\, \om \in \mathfrak{Q}$, where the constant $F = 2N + \frac{c_1^2}{16}$ and $\mathscr{C}(h) > 0$ is a constant depending on $h$.
	
	Let $\sigma = \text{min}\,\{1, r\}$. Gronwall inequality applied to the inequality from \eqref{snd17},
	\begin{equation} \bl{Gineq}
	\begin{split}
		\frac{d}{dt} &\, \left(c_1 \|\|U(t)\|^2 + \|V(t)\|^2 + \|Z(t)\|^2\right) + \sigma (c_1 \|U(t)\|^2 + \|V(t)\|^2 + \|Z(t)\|^2) \\[3pt]
		\leq &\; \mathscr{C}(h)\, (|\G(\theta_t \om)|^2 + |\G(\theta_t \om)|^4) + F |\gw|,
	\end{split}
	\end{equation}
shows that
	\beq \bl{snd18}
	\begin{split}
		c_1 &\, \|U(t)\|^2+ \|V(t)\|^2 + \|Z(t)\|^2 \leq e^{-\sigma(t - \tau)} (c_1 \|U_0\|^2+ \|V_0\|^2 + \|Z_0\|^2) \\
		+ &\, \int_{\tau}^{t} e^{-\sigma(t - s)} \left(\mathscr{C}(h)\, (|\G(\theta_s \om)|^2 + |\G(\theta_s \om)|^4) + F |\gw|\right) ds.
	\end{split}
	\eeq
It means that the weak solutions of the problem \eqref{sGq1} satisfy
	\beq \bl{snd19}
	\begin{split}
		&\, \|G(t, \theta_\tau \om; \tau,g_0)\|^2 = \|U(t)\|^2+  \|V(t)\|^2 + \|Z(t)\|^2  \\[5pt]
		&\, \leq \frac{\text{max}\,\{1, c_1\}}{\text{min}\,\{1, c_1\}}e^{-\sigma(t - \tau)} \|g_0 - \G^h(\theta_\tau \om)\|^2 \\
		&\, + \frac{1}{\text{min}\&,\{1, c_1\}} \int_{\tau}^{t} e^{-\sigma(t - s)}\, \left(\mathscr{C}(h)\, (|\G(\theta_s \om)|^2 + |\G(\theta_s \om)|^4) + F |\gw|\right) ds \\
		&\, \leq \frac{\text{max}\,\{1, c_1\}}{\text{min}\,\{1, c_1\}}e^{-\sigma(t - \tau)} \|g_0 - \G^h(\theta_\tau \om)\|^2 \\
		&\, + \frac{1}{\text{min}\,\{1, c_1\}} \int_{-\infty}^{t} e^{-\sigma(t - s)}\, \left(\mathscr{C}(h)\, (|\G(\theta_s \om)|^2 + |\G(\theta_s \om)|^4) + F |\gw|\right) ds 
	\end{split}
	\eeq
	for $t \geq \tau \in \mathbb{R},\, \om \in  \mathfrak{Q}$ and $g_0 \in H$.
	
	Since the Ornstein-Uhlenbeck process $\G(\theta_t \om)$ is tempered, the last integral in \eqref{snd19} is convergent. Therefore, the estimate \eqref{snd19} shows that the weak solution of the initial value problem \eqref{sUq1}--\eqref{inc4} will never blow up at any finite time $t \geq \tau$. The time interval of maximal existence of any weak solution is always $[\tau, \infty)$.
	\end{proof}
	
	\begin{lemma} \bl{lm2}
	There exists a random variable $R_0(\om)>0$ depending only on the parameters such that for any tempered random variable $\rho(\om)>0$ there exists a random variable $T(\rho,\om) > 0$ and the following statement holds\textup{:} For any $\tau \leq - T(\rho,\om), \, \om \in \mathfrak{Q}$, and any initial data $g_0=(u_0, v_0, z_0) \in H$ with $\|g_0\| \leq \rho(\theta_\tau \om)$, the weak solution $G(t, \theta_\tau \om; \tau, g_0)$ of the problem \eqref{sUq1}--\eqref{inc4} uniquely exists on $[\tau,\infty)$ and satisfies
		\beq \bl{snd20}
		\|G(0, \theta_\tau \om; \tau, g_0)\|^2 + \int_{-1}^{0} \|\nb G(s, \theta_\tau \om; \tau, g_0)\|^2\, ds \leq R_0(\om).
		\eeq
	\end{lemma}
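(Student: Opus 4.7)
The plan is to apply the $L^2$ dissipative estimate \eqref{snd19} established in the previous lemma at the two specific times $t=0$ and $t=-1$ (with the sample point shifted to the ``pullback'' position $\theta_\tau \om$), and then integrate the full differential inequality \eqref{snd16} over $[-1,0]$ to extract the $\|\nabla G\|^2$ integral bound.

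First, I would write down \eqref{snd19} evaluated at $t=0$, which decomposes into a data-dependent piece and a data-independent tail integral:
\[
\|G(0,\theta_\tau\om;\tau,g_0)\|^2 \leq C_0\, e^{\sigma \tau}\|g_0-\G^h(\theta_\tau\om)\|^2 + C_0 \int_{-\infty}^{0} e^{\sigma s}\bigl(\mathscr{C}(h)(|\G(\theta_s\om)|^2+|\G(\theta_s\om)|^4)+F|\gw|\bigr)\, ds,
\]
where $C_0 = \max\{1,c_1\}/\min\{1,c_1\}$. The tail integral converges, by the sublinear growth property \eqref{snd7} and the temperedness \eqref{snd6} of $\G^h$, to a well-defined random variable $R_1(\om)$ independent of $g_0$ and $\tau$. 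For the data term, I would use $\|g_0-\G^h(\theta_\tau\om)\|^2 \leq 2\rho(\theta_\tau\om)^2 + 2\|\G^h(\theta_\tau\om)\|^2$ and invoke the temperedness of both $\rho$ and $\G^h$ to conclude that $e^{\sigma\tau}(\rho(\theta_\tau\om)^2 + \|\G^h(\theta_\tau\om)\|^2)\to 0$ as $\tau\to -\infty$. This furnishes the threshold $T(\rho,\om)$ beyond which the data contribution is at most $1$, giving $\|G(0,\theta_\tau\om;\tau,g_0)\|^2 \leq 1+R_1(\om)$.

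Next, to get the gradient bound, I would revisit the already-derived inequality \eqref{snd16}, drop the nonnegative $L^4$-term on the left, and integrate in time over $[-1,0]$:
\[
2d\int_{-1}^{0}\bigl(c_1\|\nabla U\|^2+\|\nabla V\|^2+\|\nabla Z\|^2\bigr)ds \leq \bigl(c_1\|U(-1)\|^2+\|V(-1)\|^2+\|Z(-1)\|^2\bigr) + \int_{-1}^{0}\Phi(\theta_s\om)\,ds + 2F|\gw|,
\]
where $\Phi(\theta_s\om)$ collects the OU terms on the right of \eqref{snd16}. Applying the argument of the previous paragraph at $t=-1$ instead of $t=0$ (possibly enlarging $T(\rho,\om)$ by $1$) gives a uniform bound on $\|G(-1,\theta_\tau\om;\tau,g_0)\|^2$, while the $\Phi$-integral over $[-1,0]$ is a finite random variable depending only on $\om$. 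Combining with the bound on $\|G(0)\|^2$ yields \eqref{snd20} with
\[
R_0(\om) = 1 + R_1(\om) + \tfrac{1}{2d\min\{1,c_1\}}\Bigl(C_0(1+R_1(\om)) + \int_{-1}^{0}\Phi(\theta_s\om)\,ds + 2F|\gw|\Bigr).
\]

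The only real obstacle is bookkeeping: one must verify carefully that, after substituting $\om\mapsto\theta_\tau\om$ into \eqref{snd19}, the resulting tail integral remains the same random variable $R_1(\om)$ (this is where the $(-\infty, t]$ form of the integral and the invariance of Lebesgue measure under the shift $s\mapsto s+\tau$ are essential), and that the two temperedness inputs ($\rho$ and $\G^h$) cooperate to kill the $e^{\sigma\tau}$-factor. Once these routine verifications are done, the lemma follows directly from the estimates already established.
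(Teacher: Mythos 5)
Your proposal is correct and follows essentially the same route as the paper: apply the pullback estimate \eqref{snd19}, use the temperedness of $\rho$ and $\G^h$ to absorb the $e^{\sigma\tau}$ data term (defining $T(\rho,\om)$), and then integrate the differential inequality \eqref{snd16}--\eqref{snd17} over a unit time interval to extract the gradient integral. The only immaterial difference is bookkeeping: the paper bounds $\|G(-1,\theta_\tau\om;\tau,g_0)\|^2$ once and then integrates \eqref{snd17} over $[-1,t]$, obtaining the bound on $\|G(0)\|^2$ and on $\int_{-1}^{0}\|\nb G\|^2\,ds$ simultaneously from that single quantity, whereas you invoke \eqref{snd19} separately at $t=0$ and $t=-1$ and use the integrated inequality only for the gradient term.
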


	\begin{proof}
		Let $t = -1$. From the already shown inequality \eqref{snd19}, we get
		\beq \bl{snd21}
		\begin{split}
			&\|G(-1, \theta_\tau \om; \tau,g_0)\|^2 \leq \frac{\text{max}\,\{1, c_1\}}{\text{min}\,\{1, c_1\}}e^{\sigma(1 + \tau)} \|g_0 - \G^h(\theta_\tau \om)\|^2 \\
		 + &\, \frac{1}{\text{min}\,\{1, c_1\}} \int_{\infty}^{-1} e^{\sigma(1 + s)} \left(\mathscr{C}(h)\, (|\G(\theta_s \om)|^2 + |\G(\theta_s \om)|^4) + F |\mathfrak{Q}|\right) ds.
		\end{split}
		\eeq
		Thus for any given random variable $\rho(\om) > 0$ and for all $\om \in \mathfrak{Q}$, there exists a time $T(\rho,\om) > 1$ such that for any $\tau \leq - T(\rho,\om)$ we have 
		\begin{equation} \bl{snd22}
		\begin{split}
			&\, \frac{\text{max}\,\{1, c_1\}}{\text{min}\,\{1, c_1\}}e^{\sigma(1 + \tau)} \|g_0 - \G^h(\theta_\tau \om)\|^2 \\
			\leq &\; 2\, \frac{\text{max}\,\{1, c_1\}}{\text{min}\,\{1, c_1\}}e^{\sigma(1 + \tau)} \left(\rho^2(\om) + \|\G^h(\theta_\tau \om)\|^2 \right) \leq 1,
		\end{split}
		\end{equation}
		since $\G^h(\theta_t \om)$ is tempered.  Substituting the above inequality into \eqref{snd21}, we obtain 
		$$
			\|G(-1, \theta_\tau \om; \tau,g_0)\|^2  \leq r_0(\om),
		$$
		where 
		\beq \bl{snd23}
			r_0(\om) = 1 + \frac{1}{\text{min}\,\{1, c_1\}} \int_{\infty}^{-1} e^{\sigma(1 + s)} \left(\mathscr{C}(h) (|\G(\theta_s \om)|^2 + |\G(\theta_s \om)|^4) + F |\gw|\right) ds.
		\eeq
		For $t \in [-1, \infty)$, integrate the inequality \eqref{snd17} over $[-1, t]$ to get	
		\beq \bl{snd24}
		\begin{split}
			&c_1 \|U(t)\|^2 + \|V(t)\|^2 +\|Z(t)\|^2 - (c_1 \|U(-1)\|^2 + \|V(-1)\|^2 +\|Z(-1)\|^2) \\[3pt]
			&\, + 2d \int_{-1}^{t}(c_1 \|\nb U(s)\|^2 + \|\nb V(s)\|^2 + \|\nb Z(s)\|^2)\, ds \\
			&\,+ \sigma \int_{-1}^{t}(c_1 \|U(s)\|^2 + \|V(s)\|^2 + \|Z(s)\|^2)\, ds  \\
			&\leq  \int_{-1}^{t} \left[ \mathscr{C}(h) \left(|\G (\theta_s \om)|^2 + |\G (\theta_s \om)|^4\right) + F |\gw|\right] ds.
		\end{split}
		\eeq
		Thus for $t \in [-1, 0]$ we have
		\beq \bl{snd25}
		\begin{split}
			& \|G(t, \theta_\tau \om;  \tau,g_0)\|^2 + 2d \int_{-1}^{t} \|\nb G(s, \theta_\tau \om; \tau,g_0)\|^2\, ds\\[3pt]
			\leq &\, \frac{\text{max}\{c_1, 1\}}{\text{min}\{c_1, 1\}} \|G(-1, \theta_\tau \om; \tau, g_0)\|^2 + \int_{-1}^{t} \left[\mathscr{C}(\ga)\, \left(|\G(\theta_s \om)|^2 + |\G(\theta_s \om)|^4\right) + F |\gw|\right] ds.
		\end{split}
		\eeq
Let $t = 0$ in \eqref{snd25} and we see that the claim \eqref{snd20} is proved:
		\beq \bl{snd26}
			\|G(0, \theta_\tau \om; \tau,g_0)\|^2 +  \int_{-1}^{0} \|\nb G(s, \theta_\tau \om; \tau, g_0)\|^2\, ds \leq R_0(\om),
		\eeq
		where
		\beq \bl{snd27}
		\begin{split}
			&\, R_0(\om) = \frac{1}{\text{min}\{1, 2d\} \text{min}\{c_1, 1\}}\\[3pt]
			&\, \times \left\{\text{max}\{c_1, 1\} r_0(\om) + \int_{-1}^{0} \left[\mathscr{C}(h)\, \left(|\G(\theta_s \om)|^2 + |\G(\theta_s \om)|^4\right) + F |\gw|\right]\, ds \right\}
		\end{split}
		\eeq
Note that both $r_0(\om)$ and $R_0(\om) $ are random variables independent of any initial data. The proof is compldeted.
		\end{proof}
		
The two lemmas that we have shown expose the longtime dissipativity for pullback solution trajectories of the stochastic Hindmarsh-Rose cocycle to be defined in the next subsection.
	
\subsection{\textbf{Hindmarsh-Rose Cocycle and Absorbing Property}}
		Now define a concept of stochastic semiflow, which is related to the concept of cocycle in the theory of random dynamical systems.
		
		\begin{definition} \bl{SSM}
			Let $(\mathfrak{Q}, \mathcal{F}, P, \{\theta_t\}_{t\in \mathbb{R}})$ be a metric dynamical system. A family of mappings $S(t,\tau,\om): X \to X$ for $t \geq \tau \in \mathbb{R}$ and $\om \in \mathfrak{Q}$ is called a \emph{stochastic semiflow} on a Banach space $X$, if it satisfies the properties:
			
			(i)\, $S(t,s, \om) S(s,\tau,\om) = S(t,\tau,\om)$, for all $\tau \leq s \leq t$ and $\om \in \mathfrak{Q}$.
			
			(ii) $S(t, \tau, \om) = S(t-\tau, 0, \theta_\tau \om)$, for all $\tau \leq t$ and $\om\in \mathfrak{Q}$.
			
			(iii) The mapping $S(t,\tau,\om) x$ is measurable in $(t,\tau,\om)$ and continuous in $x \in X$.
		\end{definition}
		We can define the stochastic semiflow associated with the random PDE \eqref{sUq1}--\eqref{sZq1} and then the cocycle $\Phi: \mathbb{R}^+ \times \mathfrak{Q} \times H \rightarrow H$ over $(\mathfrak{Q}, \mathcal{F}, P, \{\theta_t\}_{t\in \mathbb{R}})$ for the stochastic Hindmarsh-Rose equations. 
		
		For all $t \geq \tau \in \mathbb{R}$ and $\om \in \mathfrak{Q}$, define $S(t,\tau,\om): H \to H$ to be
		\beq \bl{snd28}
			S(t,\tau, \om) g_0 = (u,v,z)(t,\om;\tau,g_0) = G(t,\om;\tau,g_0) + \G^h(\theta_t \om).
		\eeq
Then define the mapping $\Phi: \mathbb{R}^+ \times \mathfrak{Q} \times H \to H$ to be
		\beq \bl{snd29}
		\Phi (t, \, \theta_\tau \om, \, g_0) = S(t + \tau, \,\tau, \,\theta_\tau \om)\, g_0
		\eeq
which implies that
		\beq \bl{snd30}
		\Phi (t,\, \om, \, g_0) = S(t,\, 0, \,\om)\,g_0 = G(t,\om; \, 0, g_0) + \G^h(\theta_t \om).
		\eeq
		
\begin{lemma}
		The mapping $\Phi: \mathbb{R}^+ \times \mathfrak{Q} \times H \to H$ defined by \eqref{snd29} is a cocycle on the Hilbert space $H$ over the canonical metric dynamical system $(\mathfrak{Q}, \mathcal{F}, P, \{{\theta_t}\}_{t \in \mathbb{R}})$. It holds that
		\beq \bl{snd31}
		\Phi (t, \theta_{-t} \om, g_0) = G(0, \theta_{-t} \om; -t, g_0) + \G^h(\om)
		\eeq
		for any $g_0 \in H, t \geq 0$ and $\om \in \mathfrak{Q}$. This random dynamical system $\Phi$ is called the stochastic Hindmarsh-Rose cocycle. 
\end{lemma}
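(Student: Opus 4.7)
The plan is to verify that $\Phi$ defined in \eqref{snd29} satisfies the three axioms of Definition \ref{sc2}, and then to read off the pullback identity \eqref{snd31} by a direct substitution. The entire verification reduces to semiflow properties of the two-parameter family $S(t,\tau,\om)$ from \eqref{snd28}, since $\Phi(t,\om,g_0)=S(t,0,\om)g_0$ by \eqref{snd30}.

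First I would verify that $S$ is a stochastic semiflow in the sense of Definition \ref{SSM}. The composition identity $S(t,s,\om)S(s,\tau,\om)g_0=S(t,\tau,\om)g_0$ for $\tau\le s\le t$ follows from the pathwise uniqueness of weak solutions of the random evolutionary equation \eqref{sGq1} established in Lemma 2.1: both sides are weak solutions on $[\tau,t]$ that coincide at time $s$, so uniqueness forces them to agree on $[s,t]$, and adding the common Ornstein--Uhlenbeck correction $\G^h(\theta_{\cdot}\om)$ preserves the equality. The time-shift covariance $S(t,\tau,\om)=S(t-\tau,0,\theta_\tau\om)$ is obtained by the change of variable $\tilde t=t-\tau$ in the random PDE \eqref{sUq1}--\eqref{sZq1}: the definition \eqref{srd6a} of $\theta_\tau$ transforms the noise argument $\theta_t\om$ into $\theta_{\tilde t}(\theta_\tau\om)$, while the initial datum at $\tilde t=0$ matches \eqref{inc4} with $\om$ replaced by $\theta_\tau\om$, and uniqueness closes the identity. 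Continuity of $S(t,\tau,\om)g_0$ in $(t,g_0)$ and $\mathcal{F}$-measurability in $\om$ are inherited from the continuous dependence on data obtained in Lemma 2.1 together with the standard pathwise Galerkin construction of weak solutions in $C([\tau,\infty),H)$.

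Second, the three cocycle axioms for $\Phi$ then follow directly. The initial identity $\Phi(0,\om,g_0)=g_0$ holds because, by the initial condition \eqref{inc4}, $G(0,\om;0,g_0)=g_0-\G^h(\om)$, so
\[
\Phi(0,\om,g_0)=S(0,0,\om)g_0=(g_0-\G^h(\om))+\G^h(\om)=g_0.
\]
The cocycle property $\Phi(t+s,\om,g_0)=\Phi(t,\theta_s\om,\Phi(s,\om,g_0))$ rewrites via \eqref{snd30} as $S(t+s,0,\om)g_0=S(t,0,\theta_s\om)\bigl[S(s,0,\om)g_0\bigr]$; by Definition \ref{SSM}(ii) the right-hand side equals $S(t+s,s,\om)\bigl[S(s,0,\om)g_0\bigr]$, and Definition \ref{SSM}(i) then collapses it to $S(t+s,0,\om)g_0$. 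Joint continuity of $\Phi(\cdot,\om,\cdot)$ on $[0,\infty)\times H$ and measurability in $(t,\om,g_0)$ transfer immediately from the corresponding properties of $S$ and of the Ornstein--Uhlenbeck process $t\mapsto\G^h(\theta_t\om)$.

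Third, the pullback identity \eqref{snd31} is obtained by setting $\tau=-t$ in \eqref{snd29}, which gives
\[
\Phi(t,\theta_{-t}\om,g_0)=S(0,-t,\theta_{-t}\om)g_0,
\]
and then applying \eqref{snd28} at the time argument $0$ to read off the right-hand side as $G(0,\theta_{-t}\om;-t,g_0)+\G^h(\om)$, using $\theta_0=\mathrm{id}$ together with the shift structure $\theta_t\theta_{-t}\om=\om$ to reduce the Ornstein--Uhlenbeck correction to its value at the current sample $\om$. The main obstacle in the argument is bookkeeping rather than analysis: one must carefully track which sample path (\om, $\theta_\tau\om$, or $\theta_{-t}\om$) drives the non-autonomous random PDE at each stage, and justify $\mathcal{F}$-measurability of the pathwise solution map in $\om$. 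Once this bookkeeping is done using the canonical MDS structure \eqref{srd6a} and the temperedness of $\G^h(\theta_t\om)$ from \eqref{snd6}, the verification is essentially algebraic.
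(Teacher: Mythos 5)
Your proposal is correct and follows essentially the same route as the paper: both reduce the cocycle identity $\Phi(t+s,\om,g_0)=\Phi(t,\theta_s\om,\Phi(s,\om,g_0))$ to the semiflow properties (i)--(ii) of Definition \ref{SSM} for $S(t,\tau,\om)$, and both obtain \eqref{snd31} by setting $\tau=-t$ in \eqref{snd29} and using the group structure $\theta_t\theta_{-t}\om=\om$ to evaluate the Ornstein--Uhlenbeck correction at $\om$. The only difference is that you also justify the semiflow axioms (composition via pathwise uniqueness, shift covariance via the change of variables $\tilde t = t-\tau$) and the identity $\Phi(0,\om,g_0)=g_0$, which the paper asserts without proof; this adds detail but does not change the argument.
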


\begin{proof}
We need to check the cocyle property of the mapping $\Phi$:
\beq \bl{snd33}
\Phi (t + s, \om, g_0) = \Phi (t, \theta_s \, \om, \Phi (s, \om, g_0)), \quad  t \geq 0, \, s \geq 0, \, \om \in \mathfrak{Q},
\eeq
Note that, \eqref{snd1} and \eqref{snd3} imply that for any $\om \in \mathfrak{Q}$,
$$
	\G^h(\theta_{s} \om)(t) = \G^h(\om)(t + s), \quad t, s \in \mathbb{R},
$$
and
$$
	 \G^h(\theta_{s} \om) = \G^h(\theta_{s} \om)(0) = \G^h(\om)(s) .
$$
According to \eqref{snd30},
$$
	\Phi (t+s, \om, g_0) = G(t+s,\om;0,g_0) + \G^h(\theta_{t+s}\om).
$$
On the other hand,
\begin{align*} 
&\Phi (t, \theta_s \om, \Phi (s, \om, g_0)) = G(t, \theta_s \om; 0, \Phi (s, \om, g_0)) + \G^h(\theta_t\, \theta_s\, \om)\\
= &\, S(t, 0, \zsw)\, (G(s, \om; 0, g_0) + \G^h(\theta_s \om))  \quad (\tup{by}\, \eqref{snd31}) \\[4pt]
= &\, S(t, 0, \zsw)\, S(s, 0, \om) g_0  = S (t+s-s, 0, \zsw)\, S(s, 0, \om) g_0  \\[4pt]
= &\, S(t+s, s, \om)\, S(s, 0, \om) g_0  \qquad (\tup{by the second condition of Definition \ref{SSM}}) \\
= &\, S(t+s, 0, \om)\, g_0 = G (t + s, \om; 0, g_0) + \G^h (\theta_{t + s} \om). 
\end{align*} 
Therefore, the cocycle property \eqref{snd33} of the mapping $\Phi$ is proved by comparison of the above two equalities. Moreover, by definition we have
$$
\Phi (t, \ctw, g_0) = S(0, -t,  \theta_{-t} \om)g_0 =  G (0, \theta_{-t} \om; -t, g_0) + \G^h (\theta_t(\theta_{-t} \om)).
$$
Thus the equality \eqref{snd31} is valid. 
\end{proof}

\begin{theorem} \bl{Thm2}
	There exists a pullback absorbing set in the space $H$ with respect to the universe $\mathscr{D}_H$ for the stochastic Hindmarsh-Rose cocycle $\Phi$, which is the bounded ball
	\beq \bl{snd34} 
	K(\om) = B_H(0, R_H(\om)) = \{\xi \in H: \|\xi\| \leq R_H (\om)\}
	\eeq
	where $R_H(\om) = \sqrt{R_0 (\om) + \|\G^h(\om)\|^2}$ and $R_0(\om)$ is given in Lemma \ref{lm2} by \eqref{snd27}.
\end{theorem}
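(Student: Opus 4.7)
The plan is to verify the three properties that constitute a pullback absorbing set in the universe $\mathscr{D}_H$ per Definition~\ref{sc5}: (a) $K$ is a closed random set in $H$; (b) $K\in\mathscr{D}_H$, i.e.\ $R_H(\om)$ is tempered with respect to $\{\theta_t\}_{t\in\mathbb{R}}$; and (c) for every bounded tempered $B\in\mathscr{D}_H$ there is a finite entry time $T_B(\om)$ beyond which $\Phi(t,\theta_{-t}\om,B(\theta_{-t}\om))\subset K(\om)$. All three will flow from the cocycle identity \eqref{snd31}, which expresses $\Phi(t,\theta_{-t}\om,g_0)=G(0,\theta_{-t}\om;-t,g_0)+\G^h(\om)$, together with the uniform bound of Lemma~\ref{lm2}.

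First I would observe that $\om\mapsto R_H(\om)$ is measurable, since $R_0(\om)$ is built in \eqref{snd27} from convergent integrals over $(-\infty,0]$ of measurable functions of the Ornstein--Uhlenbeck processes and $\|\G^h(\om)\|^2$ is manifestly measurable; consequently the closed ball $K(\om)=B_H(0,R_H(\om))$ is a closed random set in the sense of Definition~\ref{sc3}. Next I would verify the temperedness: by performing the change of variable $\om\mapsto\theta_{-t}\om$ and $s\mapsto s-t$ in $R_0(\theta_{-t}\om)$ and invoking the subexponential growth \eqref{snd4}--\eqref{snd6} of the Ornstein--Uhlenbeck processes together with dominated convergence, one obtains $e^{-\ve t}R_0(\theta_{-t}\om)\to 0$ as $t\to\infty$ for every $\ve>0$. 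Combined with the direct temperedness of $\|\G^h(\om)\|^2$ from \eqref{snd6} with $p=2$, this yields $K\in\mathscr{D}_H$.

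Finally I would establish the pullback absorption. Given a bounded tempered $B\in\mathscr{D}_H$, set $\rho(\om):=\interleave B(\om)\interleave$, which is a tempered random variable by the defining property of $\mathscr{D}_H$. Invoke Lemma~\ref{lm2} with this $\rho$ to produce a threshold $T(\rho,\om)>0$; set $T_B(\om):=T(\rho,\om)$. For any $t\ge T_B(\om)$ and any $g_0\in B(\theta_{-t}\om)$, the specialization $\tau=-t\le -T_B(\om)$, $\|g_0\|\le\rho(\theta_{-t}\om)$ of Lemma~\ref{lm2} delivers $\|G(0,\theta_{-t}\om;-t,g_0)\|^2\le R_0(\om)$. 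The identity \eqref{snd31} together with a Young-type splitting of $\|a+b\|^2$ then produces
\begin{equation*}
\|\Phi(t,\theta_{-t}\om,g_0)\|^2 = \|G(0,\theta_{-t}\om;-t,g_0)+\G^h(\om)\|^2 \le R_0(\om)+\|\G^h(\om)\|^2 = R_H(\om)^2,
\end{equation*}
whence $\Phi(t,\theta_{-t}\om,g_0)\in K(\om)$, as required.

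The main obstacle I expect is the honest derivation of the bound in the precise form $R_H(\om)^2=R_0(\om)+\|\G^h(\om)\|^2$, since the triangle inequality alone only provides $\|a+b\|^2\le 2\|a\|^2+2\|b\|^2$; this is bookkeeping, resolvable either by absorbing a constant factor into the random variable $R_0$ or by employing a $(1+\eta,\,1+\eta^{-1})$ Young split and adjusting the shape of $R_H$ accordingly. Apart from this, the only technically delicate step is the temperedness check for $R_0$, which reduces cleanly to the Ornstein--Uhlenbeck temperedness in \eqref{snd4}; thereafter the theorem is a change-of-variables $\tau\mapsto -t$ applied to Lemma~\ref{lm2}.
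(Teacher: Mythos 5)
Your proof is correct and takes essentially the same route as the paper: the absorption estimate is exactly Lemma \ref{lm2} specialized to $\tau=-t$ together with the identity \eqref{snd31} and the temperedness of $\rho$ and $\G^h$, which is precisely what the paper does (it merely re-runs the estimates \eqref{snd21}--\eqref{snd26} inline for the ball $D(\theta_{-t}\om)$ instead of citing the lemma as a black box). Your two additions are genuine improvements rather than deviations: the paper never verifies that $K\in\mathscr{D}_H$ (closedness, measurability, and temperedness of $R_H$), and your bookkeeping caveat is warranted, since the paper's asserted bound $\|G+\G^h(\om)\|\le\sqrt{R_0(\om)+\|\G^h(\om)\|^2}$ does not follow from the triangle inequality and requires either your factor-$2$ adjustment of the radius or replacing $R_H(\om)$ by $\sqrt{R_0(\om)}+\|\G^h(\om)\|$, either of which remains tempered and leaves the theorem intact.
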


\begin{proof}
	For any bounded random ball $D(\om) = B_H(0, \rho(\om)) \in \mathscr{D}_H$, which is centered at the origin with the radius $\rho (\om)$ in $H$, and for any initial state $g_0 \in D(\theta_{-t}\om)$, by Definition \ref{sc3} and the definition of the universe $\mathscr{D}_H$, we have
	\beq \bl{snd35}
	\lim_{t \to -\infty} e^{-\ve t} \rho(\theta_{-t} \om) = 0,\quad \text{for any const} \; \ve > 0.
	\eeq
	From \eqref{snd21}, for any $t > 1$ we have
	\begin{equation*}
	\begin{split}
	&\, \interleave G(-1, \theta_{-t} \om; -t, D(\theta_{-t}\om))\interleave  = \sup_{g_0 \in D (\theta_{-t}\om)} \|G(-1, \theta_{-t} \om; -t, g_0)\| \\[3pt]
	\leq &\;  2\, \frac{\text{max}\,\{1, c_1\}}{\text{min}\,\{1, c_1\}}e^{\sigma(1 - t)} \left(\rho^2 (\theta_{-t} \om) + \|\G^h(\theta_{-t} \om)\|^2 \right) \\
	&\, + \frac{1}{\text{min}\,\{1, c_1\}} \int_{\infty}^{-1} e^{\sigma(1 + s)}\left(\mathscr{C}(h)\, (|\G(\theta_s \om)|^2 + |\G(\theta_s \om)|^4) + F |\gw|\right) ds.
	\end{split}
	\end{equation*}
	Since $\G^h(\theta_t \om)$ and $\G(\theta_t \om)$ are tempered random variables, there exists a time $T_D(\om) > 1$ such that for any $t > T_D(\om)$ and $\om \in \mathfrak{Q}$ we have 
	\begin{equation} \bl{lq1}
	2\, \frac{\text{max}\,\{1, c_1\}}{\text{min}\,\{1, c_1\}}e^{\sigma(1 - t)} \left(\rho^2(\theta_{-t} \om) + \|\G^h(\theta_{-t} \om)\|^2 \right) \leq 1.
	\end{equation}
Thus 
	$$
	\sup_{g_0 \in D(\theta_{-t}\om)} \|G(-1, \theta_{-t} \om; -t, g_0)\| \leq r_0(\om),\quad \text{for all}\; t\geq T_D(\om),
	$$
	where $r_0(\om)$ is given in \eqref{snd23}.By \eqref{snd31} and the inequalities \eqref{snd25}-\eqref{snd26} in Lemma \ref{lm2}, the above inequality implies that
	\begin{equation*}
	\begin{split}
	&\, \interleave \Phi(t, \theta_{-t} \om, D(\theta_{-t}\om))\interleave =  \interleave G(0, \theta_{-t} \om; -t, D(\theta_{-t}\om)) + \G^h(\om)\interleave  \\[3pt]
	=&\,  \sup_{g_0 \in D(\theta_{-t}\om)} \|G(0, \theta_{-t}\om; -t, g_0) + \G^h(\om)\| \leq \sqrt{R_0(\om) + \|\G^h(\om)\|^2} =: R_H(\om),
	\end{split}
	\end{equation*}
	for $t \geq T_D(\om),\,  \om \in \mathfrak{Q}$, where $R_0(\om)$ is given in \eqref{snd27}.
	It shows that the bounded ball $K(\om) = B_H(0, R_H(\om))$ in \eqref{snd34} is a pullback absorbing set for Hindmarsh-Rose random dynamical system $\Phi$. 
\end{proof}

\section{\textbf{The Existence of Random Attractor}}

In this section, we shall prove that the stochastic Hindmarsh-Rose cocycle $\Phi$ is pullback asymptotically compact on $H$ through the following theorem. Then the main result on the existence of a random attractor for this random dynamical system is established.

\begin{theorem} \bl{Thm3}
	For the Hindmarsh-Rose random dynamical system $\Phi$ with the assumption that space dimension $n = \dim \, (\gw) \leq 2$, there exists a random variable $R_E(\om) > 0$ independent of any initial time and initial state with the property that for any bounded random set $D \in \mathscr{D}_H$ there is a finite time $T(D,\om) > 0$ such that 
	\beq \bl{snd36}
	\interleave \Phi(t, \theta_{-t}\,\om,D(\theta_{-t} \om)\interleave_E = \sup_{g_0 \in D(\theta_{-t} \om)} \|\Phi(t, \theta_{-t}\,\om, g_0\|_E \leq R_E(\om).
	\eeq
for all $t  > T(D,\om)$.
\end{theorem}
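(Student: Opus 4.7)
The plan is to upgrade the $H$-absorbing property of Theorem \ref{Thm2} to an $E=H^1(\gw,\mathbb{R}^3)$ absorbing property by deriving an $H^1$-energy inequality for the random equations \eqref{sUq1}--\eqref{sZq1} and applying the uniform Gronwall lemma on the fixed time interval $[-1,0]$, using the integrated gradient estimate already furnished by Lemma \ref{lm2}. Once a bound of the form $\|\nb G(0,\theta_{-t}\om;-t,g_0)\|^2\le R_1(\om)$ is secured, the target estimate \eqref{snd36} follows from the identity \eqref{snd31} and the fact that $\|\G^h(\om)\|_E$ is a tempered random variable, since $h_i\in W^{2,4}(\gw)\hookrightarrow H^1(\gw)$.

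The key energy estimate is produced by testing \eqref{sUq1}, \eqref{sVq1}, \eqref{sZq1} in $L^2(\gw)$ against $-c_1\gd U$, $-\gd V$, $-\gd Z$ respectively, with the same weight $c_1>0$ chosen in the proof of Lemma 2.1. After integration by parts using the Neumann boundary conditions, the principal nonlinear contribution from $\vp(u)=au^2-bu^3$ in the $U$-equation takes the form
$$
c_1\int_\gw (2au-3bu^2)|\nb U|^2\,dx+c_1\int_\gw (2au-3bu^2)\,\nb\G_1^h(\theta_t\om_1)\cdot\nb U\,dx.
$$
Expanding $u=U+\G_1^h(\theta_t\om_1)$ and using $-3bu^2\le -\tfrac{3b}{2}U^2+3b(\G_1^h)^2$, a favorable pointwise dissipation $-\tfrac{3b}{2}c_1\int_\gw U^2|\nb U|^2\,dx$ appears on the left and absorbs the sign-indefinite term $2aU|\nb U|^2$ via Young's inequality. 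All residual terms involving $\G_1^h$ and $\nb\G_1^h$ are then controlled pointwise by the embedding $W^{2,4}(\gw)\hookrightarrow L^\infty(\gw)$, valid for $n\le 2$, so their time-dependent coefficients are tempered thanks to \eqref{snd4} and \eqref{snd6}. For the $V$-equation the quadratic $-\beta u^2$ produces $\beta\int u^2\gd V\,dx$, which I would handle through the two-dimensional Ladyzhenskaya--Gagliardo--Nirenberg inequality $\|u\|_{L^4}^2\le C\|u\|\,\|u\|_E$ and Young's inequality, absorbing $\epsilon\|\gd V\|^2$ into the dissipation at the cost of a term of the shape $C(\om)(1+\|\nb U\|^2)$ in which the $L^2$-factor $\|u\|^2$ is already controlled by Lemma \ref{lm2}.

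Collecting all contributions yields a differential inequality of the form
$$
\frac{d}{dt}\|\nb G(t)\|^2\le \alpha(t,\om)\,\|\nb G(t)\|^2+\beta(t,\om),\qquad t\in[-1,0],
$$
whose coefficients $\alpha,\beta$ are polynomial expressions in $\G^h$ and $\nb\G^h$, with $\int_{-1}^{0}\alpha(t,\om)\,dt$ and $\int_{-1}^{0}\beta(t,\om)\,dt$ bounded by tempered random variables of $\om$. Lemma \ref{lm2} supplies the mean-value estimate $\int_{-1}^{0}\|\nb G(s,\theta_{-t}\om;-t,g_0)\|^2\,ds\le R_0(\om)$ uniformly over $g_0\in D(\theta_{-t}\om)$ once $t>T(D,\om)+1$, which is exactly the input required by the uniform Gronwall lemma to convert the differential inequality into the pointwise bound $\|\nb G(0,\theta_{-t}\om;-t,g_0)\|^2\le R_1(\om)$. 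Adding this to the $H$-estimate of Theorem \ref{Thm2} and the tempered contribution $\|\G^h(\om)\|_E$ then delivers \eqref{snd36} with $R_E(\om):=\sqrt{R_H(\om)^2+R_1(\om)}+\|\G^h(\om)\|_E$.

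The main technical obstacle is the delicate balancing of the cubic nonlinearity when tested against $-\gd U$: the favorable pointwise dissipation $-3bu^2|\nb U|^2$ must be split via $u=U+\G_1^h$ so that it both dominates the sign-indefinite term $2au|\nb U|^2$ and leaves the $\G^h$-weighted residue under tempered control uniformly in $t$. The dimensional restriction $n\le 2$ enters essentially through the Ladyzhenskaya embedding required to close the $H^1$-estimate on the $V$-equation; in three dimensions the same strategy would demand a more delicate exploitation of the subcritical growth of $\vp$ and $\psi$.
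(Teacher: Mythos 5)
Your proposal is correct, and its overall skeleton coincides with the paper's: test \eqref{sUq1}--\eqref{sZq1} in $L^2$ against the negative Laplacian of the unknowns, run the uniform Gronwall lemma on a fixed unit window fed by the integrated gradient bound supplied by Lemma \ref{lm2}, and convert the resulting bound on $\|\nb G(0,\theta_{-t}\om;-t,g_0)\|$ into \eqref{snd36} via the identity \eqref{snd31} and $h_i\in W^{2,4}(\gw)\subset H^1(\gw)$. The genuine difference lies in how the nonlinearity is closed. The paper (see \eqref{snd40}) integrates by parts in $u$, \emph{discards} the good term $-3b\int_\gw u^2|\nb u|^2\,dx$, and estimates the indefinite terms by $\|u\|_{L^\infty}\|\nb u\|^2\le C\|u\|_{H^1}\|\nb u\|^2$, which yields a right-hand side of order $\|\nb G\|^4$ and hence the inequality \eqref{snd50} with $\gl(t)=(C_1+32\,\eta)\|\nb G(t)\|^2$: quadratic in $\zeta=\|\nb G\|^2$, salvaged by the uniform Gronwall lemma only because $\int_{t-1}^t\zeta\,ds$ is a priori bounded by \eqref{snd51} (the familiar two-dimensional Navier--Stokes device). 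You instead retain the dissipation $-\tfrac{3b}{2}c_1\int_\gw U^2|\nb U|^2\,dx$ to absorb $2au|\nb U|^2$ pointwise, and use Ladyzhenskaya's interpolation $\|u\|_{L^4}^2\le C\|u\|\,\|u\|_{H^1}$ for the $\beta u^2$ coupling in the $V$-equation, obtaining a genuinely \emph{linear} inequality $\zeta'\le\alpha\zeta+\beta$; note only that your description of $\alpha,\beta$ as ``polynomial expressions in $\G^h$ and $\nb\G^h$'' should be amended to include the random constant of $Q_1$-type coming from the absorbed $H$-bound, as your own appeal to Lemma \ref{lm2} implicitly does. Your variant buys two things: the Gronwall step is the textbook linear one, and, more substantively, you avoid the embedding $H^1(\gw)\hookrightarrow L^\infty(\gw)$ that the paper invokes for $n\le 2$ --- an embedding that in fact fails for $n=2$, so that the paper's step there needs an Agmon/Ladyzhenskaya-type repair (available precisely because $\int\zeta$ is bounded), whereas your residues weighted by $\G^h$ and $\nb\G^h$ are controlled through $W^{2,4}(\gw)\hookrightarrow C^1(\overline{\gw})$ and carry tempered coefficients. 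Both arguments localize the restriction $n\le2$ at the same point, namely the two-dimensional interpolation controlling the quadratic and cubic terms, consistent with the paper's closing remark about the obstruction at $n=3$.
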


\begin{proof}
	We can just consider any bounded ball $D = B_H (0, \rho (\om)) \in \mathscr{D}_H$ in this proof.
	
	Step 1. Respectively take the $L^2$ inner-products $\inpt{\eqref{sUq1}, -\gd U(t)}, \inpt{\eqref{sVq1}, -\gd V(t)}$ and $\inpt{\eqref{sZq1}, -\gd Z(t)}$. Sum up the resulting equalities. For any $t > \tau \in \mathbb{R}$, we have
	\beq \bl{snd37}
	\begin{split}
		& \frac{1}{2} \frac{d}{dt} (\|\nb U\|^2 + \|\nb V\|^2 + \|\nb Z\|^2) + d_1 \|\gd U\|^2 + d_2 \|\gd V\|^2 + d_3 \|\gd Z\|^2\\[3pt]
		= &\, -\int_{\gw} J \gd U dx - \int_{\gw} \alpha \gd V dx + \int_{\gw} qc \gd Z dx  \\  
		&\, - \int_{\gw}  \gd U \left[d_1 \gd h_1 \G_1 (\theta_t \om_1) + \gk \G_1^h (\theta_t \om_1)\right] dx \\
		&\, -  \int_{\gw}  \gd V \left[d_2  \gd h_2 \G_2(\theta_t \om_2) + \gk \G_2^h(\theta_t \om_2)\right] dx \\
		&\, - \int_{\gw}  \gd Z \left[d_3  \gd h_3 \G_3(\theta_t \om_3) + \gk \G_3^h(\theta_t \om_3)\right] dx \\
		&\, + \int_{\gw}  [\gd U (b u^3 -a u^2 - v + z) + \gd V (\beta u^2 + v) + \gd Z (rz -qu)]\,dx 
	\end{split}
	\eeq
The key last integral on right-hand side of \eqref{snd37} can be written as
	\beq \bl{snd39}
	\begin{split}
		&\, \int_{\gw}  \left[\gd U (b u^3 -a u^2 - v + z) + \gd V (\beta u^2 + v) + \gd Z (rz -qu)\right] dx\\
		= &\, \int_{\gw} \left[(bu^3 - au^2 -v + z) \gd u + (\beta u^2 + v) \gd v + (rz -qu) \gd z\right] dx\\
		&\,+ \int_{\gw} [-(bu^3 - au^2 -v + z) \gd \G_1^h(\theta_t \om_1) - (\beta u^2 + v) \gd \G_2^h(\theta_t \om_2)  \\[4pt]
		&\, + (rz -qu) \gd \G_3^h(\theta_t \om_3)] \, dx.
	\end{split}
	\eeq 
The first integral on the right-hand side of \eqref{snd39} is estimated as follows.
	\beq \bl{snd40}
	\begin{split}
		&\, \int_{\gw} \left[(bu^3 - au^2 -v + z) \gd u + (\beta u^2 + v) \gd v + (rz -qu) \gd z\right] dx\\[2pt]
		= &\, -3b \int_{\gw} u^2 |\nb u|^2 \,dx + 2a \int_{\gw} u |\nb u|^2 \,dx + \int_{\gw} \nb u \cdot \nb v \,dx - \int_{\gw} \nb z \cdot \nb u \,dx\\
		&\, - 2 \beta \int_{\gw} u \nb u \cdot \nb v \,dx - \int_{\gw} |\nb v|^2 \,dx - r\int_{\gw} |\nb z|^2 \,dx + q \int_{\gw} \nb u \cdot \nb z \,dx\\
		\leq &\, \frac{-3b}{4} \|\nb (u^2)\|^2 + 2 a \|u\|_{L^{\infty}} \|\nb u\|^2 + \|\nb u\|^2 + \|\nb v\|^2 + \|\nb z\|^2 + \|\nb u\|^2 \\[10pt]
		&\, + \beta \|u\|_{L^{\infty}} (\|\nb u\|^2 + \|\nb v\|^2) - \|\nb v\|^2 - r \|\nb z\|^2 + q (\|\nb u\|^2 + \|\nb z\|^2)\\[12pt]
		\leq &\, 2 C\, \text{max}\,\{2a, \beta\} \|u\|_{H^1} (\|\nb u\|^2 + \|\nb v\|^2) \\[12pt]
		&\, + \max \,\{2,q\} (\|\nb u\|^2 + \|\nb v\|^2 + \|\nb z\|^2)  \\[12pt]
		\leq &\, \tilde{C}\, \text{max}\,\{2a, \beta\} (\|\nb u\|^3 + \|\nb u\| \|\nb v\|^2) \\[12pt]
		&\, + \max \,\{2,q\} (\|\nb u\|^2 + \|\nb v\|^2 + \|\nb z\|^2)  \\[8pt]
		\leq &\, \frac{C_1}{4}\left(\|\nb G\|^3 + \|\nb \G^h(\theta_t \om)\|^3 + \|\nb G\|^2 + \|\nb \G^h(\theta_t \om)\|^2 \right) \\[8pt]
		\leq &\, \frac{C_1}{4}\left(\frac{3}{4} \|\nb G\|^4 + \frac{1}{4} + \frac{3}{4} \|\nb \G^h(\theta_t \om)\|^4 +  \frac{1}{4} + \frac{1}{2} \|\nb G\|^4 + \frac{1}{2} \|\nb \G^h(\theta_t \om)\|^4\right)  \\[5pt]
		=&\, \frac{C_1}{4} \left(\frac{5}{4} \|\nb G\|^4 + \frac{5}{4} \|\nb \G^h(\theta_t \om)\|^4 + \frac{1}{2}\right) \leq \frac{C_1}{2} \left(\|\nb G\|^4 + \|\nb \G^h(\theta_t \om)\|^4 + 1\right),
	\end{split}
	\eeq
	where $C_1 > 0$ is constant and we have used the Young's inequality and \eqref{snd28}. For the second step of the chain inequalities in \eqref{snd40}, the Sobolev embedding $H^1(\gw) \hookrightarrow L^{\infty}(\gw)$ under the assumption $\text{dim}(\gw) \leq 2$ so that $\|u\|_{L^{\infty}} \leq C \|u\|_{H^1}$ is used to deal with the integral term
	$$
	- 2 \beta \int_{\gw} u \nb u \cdot \nb v \,dx.
	$$ 
	
	Next we estimate the second integral in \eqref{snd39}: 
	
	\beq \bl{snd41}
	\begin{split}
		\int_{\gw} &\, [(au^2 - bu^3 - v + z) \gd \G_1^h(\theta_t \om_1) - (\beta u^2 + v) \gd \G_2^h(\theta_t \om_2)  \\[3pt]
		 &\, + (rz -qu) \gd \G_3^h(\theta_t \om_3)] \, dx \\[6pt]
		= &\, \int_{\gw} [ u^2 \left(a \gd \G_1^h(\theta_t \om_1) - \beta \gd \G_2^h(\theta_t \om_2)\right) + bu^3 \gd \G_1^h(\theta_t \om_1) \\[6pt]
		&\, - v(\gd \G_1^h(\theta_t \om_1) + \gd \G_2^h(\theta_t \om_2))  \\[10pt]
		&\, +  z(\gd \G_1^h(\theta_t \om_1) - r \gd \G_3^h(\theta_t \om)_3) + qu \gd \G_3^h(\theta_t \om_3)] \, dx \\[6pt]
		\leq &\, \int_{\gw} \left[ \frac{u^4}{4} + \left(a \gd \G_1^h(\theta_t \om_1) - \beta \gd \G_2^h(\theta_t \om_2)\right)^2 + \frac{3}{4}u^4 + \frac{b^4}{4} (\gd \G_1^h(\theta_t \om_1))^4 + \frac{v^2}{2} \right.\\
		& \left. + \frac{1}{2}(\gd \G_1^h(\theta_t \om_1) + \gd \G_2^h(\theta_t \om_2))^2 + \frac{z^2}{2} + \frac{1}{2}(\gd \G_1^h(\theta_t \om_1) - r \gd \G_3^h(\theta_t \om_3))^2 \right.\\[5pt]
		& \left. + \frac{u^2}{2} + \frac{q^2}{2} (\G_3^h(\theta_t \om_3))^2 \right] dx.
	\end{split}
	\eeq
	
	Step 2. We further treat the integral in the last step of \eqref{snd41}, which is decomposed into the following two parts. The first part is	
	\beq \bl{snd42}
	\begin{split}
		&\, \int_{\gw} \left(u^4 + \frac{u^2}{2} + \frac{v^2}{2} + \frac{z^2}{2}\right) dx = \int_{\gw} \left[\left(U + \G_1^h(\theta_t \om_1) \right)^4 + \frac{1}{2} \left(U + \G_1^h(\theta_t \om_1) \right)^2 \right.\\
		&\left. +\,   \frac{1}{2} \left(V + \G_2^h(\theta_t \om_2) \right)^2 + \frac{1}{2} \left(Z + \G_3^h(\theta_t \om_3) \right)^2\right] dx\\
		\leq & \int_{\gw} \left[8 \left(U^4 + (\G_1^h(\theta_t \om_1))^4\right) + U^2 + (\G_1^h(\theta_t \om_1))^2  \right. \\[5pt]
		&\left. +\, V^2 + (\G_2^h(\theta_t \om_2))^2 + Z^2 + (\G_3^h(\theta_t \om_3))^2\right] dx\\[5pt]
		\leq & \int_{\gw} (8U^4(t) + U^2(t) + V^2(t) + Z^2(t))\, dx + 8 \|\G^h(\theta_t \om)\|_{L^4}^4 + \|\G^h(\theta_t \om)\|^2.
	\end{split}
	\eeq 
According to \eqref{snd19} and $D = B_H (0, \rho(\om))$, for $t \geq \tau \in \mathbb{R}$,
\begin{equation*}
	\begin{split}
		 \|U(t)&\,\|^2 +  \|V(t)\|^2 + \|Z(t)\|^2  \leq \frac{\text{max}\,\{1, c_1\}}{\text{min}\,\{1, c_1\}} \,2e^{-\sigma(t - \tau)} (\rho^2 (\theta_\tau \om) + \| \G^h(\theta_\tau \om)\|^2)  \\
		&+ \frac{1}{\text{min}\,\{1, c_1\}} \int_{-\infty}^{t} e^{-\sigma(t - s)}\, \left(\mathscr{C}(h)\, (|\G(\theta_s \om)|^2 + |\G(\theta_s \om)|^4) + F |\gw|\right) ds. 
	\end{split}
\end{equation*}
The tempered property of $\rho^2 (\theta_\tau \om) + \| \G^h(\theta_\tau \om)\|^2$ implies that there is a sufficiently large random variable $T (D, \om) > 6$ such that if $\tau < - \,T(D, \om)$, then it holds that $\|U(t)\|^2 +  \|V(t)\|^2 + \|Z(t)\|^2  \leq Q_1 (\om)$ for any $t \in [\tau/2, 0]$,
where
\beq \bl{Q1}
	\begin{split}
	Q_1 (\om) = 1 +  \frac{1}{\min \{1, c_1\}} \int_{-\infty}^{0} e^{\sigma s} \left(\mathscr{C}(h)\, (|\G(\theta_s \om)|^2 + |\G(\theta_s \om)|^4) + F |\gw|\right) ds.
	\end{split}
\eeq
By the embedding $H^1(\gw) \hookrightarrow L^4 (\gw)$, there is a positive constant $\eta > 0$  such that $\|U\|^4_{L^4} \leq \eta (\|U\|^2 + \|\nb U\|^2)^2 \leq 2 \eta (\|U\|^4 +\|\nb U \|^4)$.  It follows from \eqref{snd42} that
	\beq \bl{snd43}
	\begin{split}
		&\, \int_{\gw} \left(u^4(t) + \frac{u^2(t)}{2} + \frac{v^2(t)}{2} + \frac{z^2(t)}{2}\right)\, dx  \\[3pt]
		\leq &\, 16\, \eta\, \|U(t)\|^4  + 16\, \eta\, \|\nb U(t)\|^4 + Q_1 (\om) + 8 \|\G^h(\theta_t \om)\|_{L^4}^4 + \|\G^h(\theta_t \om)\|^2 \\[8pt]
		\leq &\, 16\, \eta\, Q_1^2 (\om) + 16\, \eta\, \|\nb G(t)\|^4 + Q_1 (\om) + 8 \|\G^h(\theta_t \om)\|_{L^4}^4 + \|\G^h(\theta_t \om)\|^2 \\
	\end{split}
	\eeq
provided that $t \in [\tau/2, 0]$ and $\tau < - T(D, \om)$.

	For the second part (the rest part) in the last integral of \eqref{snd41}, we have 
	\beq \bl{snd44}
	\begin{split}
		&\, \int_{\gw} \left[\left(a \gd \G_1^h(\theta_t \om_1) - \beta \gd \G_2^h(\theta_t \om_2)\right)^2 + \frac{b^4}{4} (\gd \G_1^h(\theta_t \om_1))^4 \right. \\
		& \quad + \left. \frac{1}{2}\left(\gd \G_1^h(\theta_t \om_1) + \gd \G_2^h(\theta_t \om_2)\right)^2 \right.\\
		& \quad + \left. \frac{1}{2}\left(\gd \G_1^h(\theta_t \om_1) - r \gd \G_3^h(\theta_t \om_3)\right)^2 + \frac{q^2}{2} (\G_3^h(\theta_t \om_3))^2 \right] dx\\
		\leq &\, \int_{\gw} \left[2a^2 (\gd h_1(x))^2 (\G_1(\theta_t \om_1))^2 + 2 \beta^2 (\gd h_2(x))^2 (\G_2(\theta_t \om_2))^2 \right.\\[3pt]
		& \quad + \left. \frac{1}{2}b^4 (\gd h_1(x))^4 (\G_1(\theta_t \om_1))^4 + 2 (\gd h_1(x))^2 (\G_1(\theta_t \om_1))^2 \right.\\[8pt]
		& \quad + \left. (\gd h_2(x))^2 (\G_2(\theta_t \om_2))^2 + (r^2 + q^2) (\gd h_3(x))^2 (\G_3(\theta_t \om_3))^2 \right]dx\\[8pt]
		= &\, \int_{\gw} \left[2 (a^2  + 1)(\gd h_1)^2 (\G_1(\theta_t \om_1))^2 + (2 \beta^2  + 1)(\gd h_2)^2 (\G_2(\theta_t \om_2))^2 \right.\\
		& \quad + \left. (r^2 + q^2)(\gd h_3)^2 (\G_3(\theta_t \om_3))^2 + \frac{1}{2}b^4 (\gd h_1)^4 (\G_1(\theta_t \om_1))^4 \right] dx \\[5pt]
		\leq &\,\; |\G(\theta_t \om)|^2 \left[2 (a^2  + 1)\|\gd h_1\|^2 +  (2 \beta^2  + 1)\|\gd h_2\|^2\right]  \\[5pt]
		&\quad + |\G(\theta_t \om)|^2 \left[ (r^2 + q^2)\|\gd h_3\|^2 \right] + \frac{1}{2}\, b^4 |\G(\theta_t \om)|^4 \|\gd h_1\|_{L^4}^4.	
		\end{split}
		\eeq
In \eqref{snd44}, the assumption that $\{h_i(x): i=1, 2, 3\} \subset W^{2,4}(\gw)$ specified in Section 1 is used.
		
		Step 3. Assemble the estimates \eqref{snd43} and \eqref{snd44} of the two parts in \eqref{snd41}. Then we have proved that 
		\beq \bl{snd46}
		\begin{split}
		 \int_{\gw} [(au^2 - bu^3 - v + z) \gd \G_1^h(\theta_t \om) &\, - (\beta u^2 + v) \gd \G_2^h(\theta_t \om) + (rz -qu) \gd \G_3^h(\theta_t \om)] \, dx\\[3pt]
			\leq&\, 16\, \eta\, \|\nb G\|^4 + Q_2(t, \om),
		\end{split}
		\eeq
where 
\beq \bl{Q2}
	\begin{split}
	Q_2 (t, \om) &= 16\, \eta\, Q_1^2 (\om) + Q_1 (\om) + 8 \|\G^h(\theta_t \om)\|_{L^4}^4 + \|\G^h(\theta_t \om)\|^2 \\[5pt]
	&\, + |\G(\theta_t \om)|^2 \left[2 (a^2  + 1)\|\gd h_1\|^2 +  (2 \beta^2  + 1)\|\gd h_2\|^2\right]  \\
  	&\, + |\G(\theta_t \om)|^2 \left[ (r^2 + q^2)\|\gd h_3\|^2 \right] + \frac{1}{2}\, b^4 |\G(\theta_t \om)|^4 \|\gd h_1\|_{L^4}^4.
	\end{split}
\eeq
In turn, substitute the inequalities \eqref{snd40} and \eqref{snd46} into \eqref{snd39}, we get
		\beq \bl{snd47}
		\begin{split}
			&\, \int_{\gw}  \left[(b u^3 -a u^2 - v + z) \gd U  + (\beta u^2 + v) \gd V  + (rz -qu) \gd Z  \right] dx\\[3pt]
			\leq &\, \left(\frac{C_1}{2} + 16\, \eta \right) \|\nb G\|^4 + \frac{C_1}{2} \left(\|\nb \G^h(\theta_t \om)\|^4 + 1\right) + Q_2( t, \om).
		\end{split}
		\eeq
		
	Besides, by the Gauss Divergence theorem and the homogeneous Neumann boundary condition, in \eqref{snd37} we have 
		$$
		\int_{\gw} J\, \gd U dx = \int_{\gw} \alpha \,\gd V dx = \int_{\gw} q\,c \,\gd Z dx = 0.
		$$
Moreover, the three middle terms in \eqref{snd37} satisfies the estimates
		\beq \bl{snd38}
		\begin{split}
			&\, - \int_{\gw}  \gd U \left[d_1  \gd h_1 \G_1 (\theta_t \om_1) + \gk \G_1^h (\theta_t \om_1)\right] dx \\[3pt]
			&\, -  \int_{\gw}  \gd V \left[d_2  \gd h_2 \G_2(\theta_t \om_2) + \gk \G_2^h(\theta_t \om_2)\right] dx\\
			&\, - \int_{\gw}  \gd Z \left[d_3  \gd h_3 \G_3(\theta_t \om_3) + \gk \G_3^h(\theta_t \om_3))\right] dx \\
			\leq &\;  \frac{1}{2} \left(d_1 \|\gd U\|^2 + d_2 \|\gd V\|^2 + d_3 \|\gd Z\|^2\right) + \frac{1}{2} \,C_2 (h) \,|\G(\theta_t \om)|^2,
		\end{split}
		\eeq
		where $C_2 (h) > 0$ is a constant only depending on the functions $\{h_1, h_2, h_3\}$.
		
	Finally, we substitute \eqref{snd47} and \eqref{snd38} into the inequality \eqref{snd37}. It follows that
		\beq \bl{snd48}
		\begin{split}
			&\, \frac{d}{dt} \|\nb G(t)\|^2 + d_1 \|\gd U(t)\|^2 + d_2 \|\gd V(t)\|^2 + d_3 \|\gd Z(t)\|^2\\[3pt]
			\leq (C_1 + 32\, \eta) &\,\|\nb G(t)\|^4 + C_1 \left(\|\nb \G^h(\theta_t \om)\|^4 + 1\right) + 2 Q_2(t, \om) + C_2(h) |\G(\theta_t \om)|^2.
		\end{split}
		\eeq

	Step 4. In the final step of this proof, we apply the uniform Gronwall inequality \cite{SY} to the following differential inequality reduced from \eqref{snd48},
		\beq \bl{snd49}
		\begin{split}
			 \frac{d}{dt} \|\nb G(t)\|^2 \leq&\, \left(C_1 + 32\, \eta \right) \|\nb G\|^4 + C_1 \left(\|\nb \G^h(\theta_t \om)\|^4 + 1\right)\\[3pt]
		 + &\, 2 Q_2(t, \om) + C_2(h) |\G(\theta_t \om)|^2,
		\end{split}
		\eeq
		which can be written in the form
		\beq \bl{snd50}
		\frac{d \zeta}{dt} \leq \gl \, \zeta + h,\quad \text{for} \;\, t \in [\tau/2, 0], \, \tau < - T(D, \om),
		\eeq
		where $T(D, \om) > 6$ as specified before \eqref{Q1}, and
		\begin{gather*}
		\zeta(t) =  \|\nb G(t)\|^2, \\[5pt]
		\gl (t) = (C_1 + 32\, \eta) \|\nb G(t)\|^2,\\[5pt]
		h(t) = C_1 \left(\|\nb \G^h(\theta_t \om)\|^4 + 1\right) + 2 Q_2(t, \om) + C_2(h) |\G(\theta_t \om)|^2.
		\end{gather*}
		
		To estimate the functions $\zeta (t)$ and $\gl (t)$, we integrate of the inequality \eqref{snd17} over the time interval $[t-1, t] \subset [\tau/2, 0]$ to get
\begin{equation} \bl{intG}
	\begin{split}
	& 2d \int_{t-1}^t \|\nb G(s, \theta_\tau \om; \,\tau, g_0 - \G^h (\theta_\tau \om))\|^2 ds \\
	\leq &\, \frac{\max \{1, c_1\}}{\min \{1, c_1\}} \|G(t-1, \theta_\tau \om; \tau, g_0 - \G^h (\theta_\tau \om))\|^2 \\
	+ &\, \int_{t-1}^t \left[ \mathscr{C}(h) \left(|\G(\theta_s \om)|^2 + |\G(\theta_s \om)|^4\right) + F |\gw|\right] ds
	\end{split}
\end{equation} 
It has been shown in Step 2 that 
\beq \bl{Gt1}
	 \|G(t-1, \theta_\tau \om; \, \tau, g_0 - \G^h (\theta_\tau \om))\|^2 \leq Q_1 (\om)
\eeq
and $Q_1 (\om)$ is given in \eqref{Q1}. It follows from \eqref{intG} and \eqref{Gt1} that
		\beq \bl{snd51}
		\int_{t-1}^{t} \zeta(s)ds = \int_{t-1}^{t} \|\nb G(s, \theta_\tau \om;\, \tau,g_0 - \G^h(\theta_\tau \om))\|^2 ds \leq R_1 (\om),
		\eeq
for any $g_0 \in D(\theta_\tau \om), \, t \in [-2, 0] \subset [\tau/2 + 1, 0], \, \tau < -T(D,\om)$, where
		\beq \bl{snd52}
		\begin{split}
		R_1 (\om) &\,= \frac{1}{2d}  \left\{\frac{\text{max}\{c_1, 1\}}{\text{min}\{c_1, 1\}} Q_1(\om) \right.\\[3pt]
		 &\, \left.+ \int_{-3}^{0} \left[\mathscr{C}(h)\left(|\G(\theta_s \om)|^2 + |\G(\theta_s \om)|^4\right) + F |\gw|\right] ds \right\}.
		\end{split}
		\eeq
Then in the same way, for any $g_0 \in D(\theta_\tau \om), \, t \in [-2, 0], \, \tau < - T(D,\om)$, we have
		\beq \bl{snd53}
			\int_{t-1}^{t} \gl (s)\, ds \leq (C_2 + 32\, \eta) R_1 (\om).
		\eeq
Moreover, for $t \in [-2, 0], \, \tau < -T(D,\om)$, we have %
		\beq \bl{snd54}
		\begin{split}
			&\int_{t-1}^{t} h(s)\, ds  \\
			\leq &\, \int_{t-1}^{t} \left[C_1 \left(\|\nb \G^h(\theta_s \om)\|^4 + 1\right) + 2 Q_2(s, \om) + C_2(h) |\G(\theta_s \om)|^2 \right] ds\\
			\leq &\, \int_{-3}^{0} \left[C_1 \left(\|\nb \G^h(\theta_s \om)\|^4 + 1\right) + 2 Q_2(s, \om) + C_2(h) |\G(\theta_s \om)|^2 \right] ds.
		\end{split}
		\eeq
Therefore, for any $t \in [-2, 0], \tau \leq -T(D,\om)$ and $g_0 \in D(\theta_t \om)$, by the uniform Gronwall inequality applied to \eqref{snd50} and by \eqref{snd51}, \eqref{snd53} and \eqref{snd54}, we obtain
		\beq \bl{snd55}
		\begin{split}
			&\|\nb G(t,\om;\tau,g_0)\|^2  \leq e^{R_1 (\om)} \left\{(C_2 + 32\, \eta) R_1(\om) \right.\\[3pt]
			\quad & \left. + \int_{-3}^{0} \left[C_1 \left(\|\nb \G^h(\theta_s \om)\|^4 + 1\right) + 2 Q_2(s, \om) + C_2(h) |\G(\theta_s \om)|^2 \right]\,ds \right\}.
		\end{split}
		\eeq
Finally, by \eqref{snd31} and \eqref{snd55}, we reach the conclusion that any for $t < -T(D,\om)$,
		\beq \bl{snd56}
		\begin{split}
			 &\,|||\Phi (t, \theta_{-t} \,\om, D(\theta_{-t}\, \om)|||_E^2 =  \sup_{g_0 \in D(\theta_t \om)} \|\Phi (t, \theta_{-t}\, \om, g_0)\|_E^2 \\
			 = &\, \sup_{g_0 \in D(\theta_t \om)}  \|G(0, \theta_{-t} \om; \, -t, \, g_0) + \G^h(\om)\|_E^2\\
			 \leq &\, \sup_{g_0 \in D(\theta_t \om)} 2\left(\|G(0, \theta_{-t}\om; \, -t, \, g_0)\|_E^2 + \|\G^h(\om)\|_E^2\right)\\
			 = &\, \sup_{g_0 \in D(\theta_t \om)} 2\left(\|G(0,\theta_{-t} \om; \, -t, \, g_0)\|_H^2 + \|\nb G(0, \theta_{-t} \om \, ;-t, \, g_0)\|_H^2 +  \|\G^h(\om)\|_E^2\right)\\
			 \leq &\, R_E^2(\om),
		\end{split}
		\eeq
		where 
		\beq \bl{snd57}
		\begin{split}
			&R_E^2(\om) = 2Q_1 (\om) + 2\|\G^h(\om)\|_E^2 + 2e^{R_1 (\om)} \left\{(C_2 + 32\, \eta) R_1 (\om) \right.\\[3pt]
		 &\left. + \int_{-3}^{0} \left[C_2 \left(\|\nb \G^h(\theta_s \om)\|^4 + 1\right) + 2 Q_2(s, \om) + C_1 |\G(\theta_s \om)|^2 \right]\,ds \right\},
		\end{split}
		\eeq
		and $Q_1 (\om)$ is given in \eqref{Q1}. Note that $R_E (\om)$ is  a random variable independent of any initial time and initial state. Thus the result \eqref{snd36} of this theorem is proved.
\end{proof}

We complete this section by proving the main result on the existence of a random attractor for the Hindmarsh-Rose random dynamical system $\Phi$ in the space $H$.
\begin{theorem} \bl{lm5}
	For the spacial domain dimension $n = \textup{dim} \,(\gw) \leq 2$ and any positive parameters $d_1, d_2, d_3, a, b, \alpha, \beta, q$, $r, J$ and for any $c \in \mathbb{R}$, there exists a unique random attractor $\mathcal{A}(\om)$ in the space $H = L^2 (\gw, \mathbb{R}^3)$ with respect to $\mathscr{D}_H$ for the Hindmarsh-Rose random dynamical system $\Phi$ over the metric dynamical system $(\mathfrak{Q}, \mathcal{F}, P, \{{\theta_t}\}_{t \in \mathbb{R}})$. 
\end{theorem}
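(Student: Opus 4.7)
The plan is to invoke the abstract criterion Theorem~\ref{sc8}, so the two conditions to be verified are the existence of a closed pullback absorbing set in $\mathscr{D}_H$ and the pullback asymptotic compactness of the cocycle $\Phi$ on $H$. Both pieces of analytic machinery have already been assembled in Theorems~\ref{Thm2} and~\ref{Thm3}; the remaining work is to splice them together and check measurability and temperedness.

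For condition (i), I would take the pullback absorbing ball $K(\om) = B_H(0, R_H(\om))$ furnished by Theorem~\ref{Thm2}, where $R_H(\om) = \sqrt{R_0(\om) + \|\G^h(\om)\|^2}$ with $R_0(\om)$ given by \eqref{snd27}. This set is closed in $H$ by construction. To confirm that $K \in \mathscr{D}_H$ it suffices to verify that $R_H(\om)$ is a tempered random variable: the formula \eqref{snd27} expresses $R_0(\om)$ as the integral $\int_{-\infty}^{0} e^{\sigma s}\bigl(\mathscr{C}(h)(|\G(\theta_s\om)|^2 + |\G(\theta_s\om)|^4) + F|\gw|\bigr)\,ds$ plus bounded terms, and the sublinear growth of $W(t)$ from Proposition~\ref{sc9} together with the temperedness \eqref{snd4}--\eqref{snd6} of the Ornstein-Uhlenbeck processes guarantees that $\lim_{t \to \infty} e^{-\ve t} R_H(\theta_{-t}\om) = 0$ for every $\ve > 0$.

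For condition (ii), I would use Theorem~\ref{Thm3}: since we assume $n = \dim(\gw) \leq 2$, for any bounded $D \in \mathscr{D}_H$ there exist $T(D,\om) > 0$ and a random variable $R_E(\om) > 0$ such that
\begin{equation*}
  \Phi(t, \theta_{-t}\om, D(\theta_{-t}\om)) \subset B_E(0, R_E(\om)) \quad \text{for all } t > T(D,\om).
\end{equation*}
Because $\gw \subset \mathbb{R}^n$ is a bounded domain, the Rellich-Kondrachov theorem yields the compact Sobolev embedding $E = H^1(\gw, \mathbb{R}^3) \hookrightarrow H = L^2(\gw, \mathbb{R}^3)$. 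Therefore the $E$-ball $B_E(0, R_E(\om))$ is a precompact subset of $H$. Given any sequence $t_m \to \infty$ and $x_m \in D(\theta_{-t_m}\om)$, for $m$ large enough we have $\Phi(t_m, \theta_{-t_m}\om, x_m) \in B_E(0, R_E(\om))$, from which a subsequence converges in the $H$-norm. This is precisely the pullback asymptotic compactness in the sense of Definition~\ref{sc6}.

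With both hypotheses of Theorem~\ref{sc8} verified, the conclusion follows: there exists a unique random attractor $\mathcal{A} = \{\mathcal{A}(\om)\}_{\om \in \mathfrak{Q}} \in \mathscr{D}_H$ given by the formula \eqref{omK} applied to $K(\om)$. The main obstacle has really already been overcome in the previous sections, namely the sharp $H^1$ bound of Theorem~\ref{Thm3} whose proof required the dimension restriction $n \leq 2$ in order to invoke $H^1(\gw) \hookrightarrow L^\infty(\gw)$; at the level of this final theorem, the only delicate point is ensuring that the abstract $\mathcal{F}$-measurability of $\mathcal{A}(\om)$ is inherited from $K(\om)$, which is handled by the measurability result cited from \cite[Theorem 2.9]{Y17} in the proof of Theorem~\ref{sc8}.
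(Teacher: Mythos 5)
Your proposal is correct and follows essentially the same route as the paper's own proof: combine the closed pullback absorbing ball $K(\om)$ from Theorem~\ref{Thm2} with the tempered $E$-bound of Theorem~\ref{Thm3} and the compact embedding $E \hookrightarrow H$ to obtain pullback asymptotic compactness, then invoke Theorem~\ref{sc8}. Your added checks (temperedness of $R_H(\om)$, the Rellich--Kondrachov argument, and the measurability remark via \cite[Theorem 2.9]{Y17}) are details the paper leaves implicit, not a different method.
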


\begin{proof}
	In Theorem \ref{Thm2}, we proved that there exists a pullback absorbing set $K(\om) \subset H$ for the stochastic Hindmarsh-Rose cocycle $\Phi$. According to Definition \ref{sc6}, Theorem \ref{Thm3} and the compact imbedding $E \hookrightarrow H$ confirmed that this cocycle $\Phi$ is pullback asymptotically compact on $H$ with respect to $\mathscr{D}_H$. 
	
	Hence, by Theorem \ref{sc8}, there exists a unique random attractor in the space $H$ for this Hindmarsh-Rose random dynamical system $\Phi$, which is given by
	\beq \bl{srd58}
	\mathcal{A} (\omega) = \bigcap_{\tau \geq 0} \; {\overline{\bigcup_{ t \geq \tau} \varphi (t, \theta_{-t} \omega, K (\theta_{-t} \omega))}}, \quad \om \in \mathfrak{Q},
	\eeq
	where $K(\om) = B_H(0, R_H(\om))$ is defined in \eqref{snd34}. The proof is completed.
\end{proof}

	We make a remark that there is an essential difficulty in proving the pullback asymptotic compactness of the stochastic Hindmarsh-Rose cocycle for the space dimension $n=3$. This is the reason that we reduce the space dimension $n= \text{dim}\, (\gw) \leq 2$ in Theorem \ref{Thm3} and Theorem \ref{lm5} for this Hindmarsh-Rose random dynamical system. All the results shown in the Section 2 remain valid for space dimesion $n= \text{dim} \,(\gw) \leq 3$. We conjecture that there should exist a random attractor for the random dynamical system generated by the stochastic Hindmarsh-Rose equations with the additive noise also on the 3-dimensional domain space.

\vspace{10pt}
\bibliographystyle{amsplain}

\end{document}